\definecolor{Gray}{gray}{0.9}
\pgfplotsset{compat=1.8}
\definecolor{rulecolor}{RGB}{0,71,171}
\definecolor{tableheadcolor}{gray}{0.92}
\colorlet{tableheadcolor}{gray!25} 
\colorlet{tablerowcolor}{gray!10} 
\DeclareFontFamily{OMX}{MnSymbolE}{}
\DeclareSymbolFont{MnLargeSymbols}{OMX}{MnSymbolE}{m}{n}
\DeclareFontShape{OMX}{MnSymbolE}{m}{n}{
	<-6>  MnSymbolE5
	<6-7>  MnSymbolE6
	<7-8>  MnSymbolE7
	<8-9>  MnSymbolE8
	<9-10> MnSymbolE9
	<10-12> MnSymbolE10
	<12->   MnSymbolE12
}{}
\DeclareFontShape{OMX}{MnSymbolE}{b}{n}{
	<-6>  MnSymbolE-Bold5
	<6-7>  MnSymbolE-Bold6
	<7-8>  MnSymbolE-Bold7
	<8-9>  MnSymbolE-Bold8
	<9-10> MnSymbolE-Bold9
	<10-12> MnSymbolE-Bold10
	<12->   MnSymbolE-Bold12
}{}
\let\llangle\@undefined
\let\rrangle\@undefined
\DeclareMathDelimiter{\llangle}{\mathopen}%
{MnLargeSymbols}{'164}{MnLargeSymbols}{'164}
\DeclareMathDelimiter{\rrangle}{\mathclose}%
{MnLargeSymbols}{'171}{MnLargeSymbols}{'171}
\numberwithin{equation}{section}
\newtheorem{theorem}{Theorem}[section]
\newtheorem{lemma}[theorem]{Lemma}
\newtheorem{proposition}[theorem]{Proposition}
\theoremstyle{definition}
\newtheorem{remark}[theorem]{Remark}
\newtheorem{example}[theorem]{Example}
\newcommand{\Acal}{\mathcal{A}}
\newcommand{\Ucal}{\mathcal{U}}
\newcommand{\Ffrak}{\mathfrak{F}}
\newcommand{\sW}{\mathscr{W}}
\newcommand{\Wscr}{\mathscr{W}}
\newcommand{\Bbf}{\mathbf{B}}
\newcommand{\Cbf}{\mathbf{C}}
\newcommand{\Gbf}{\mathbf{G}}
\newcommand{\Hbb}{\mathbb{H}}
\DeclareMathOperator{\diam}{diam}
\DeclareMathOperator{\gr}{gr}
\DeclareMathOperator{\dist}{dist}
\DeclareMathOperator{\supp}{supp}
\newcommand{\N}{\mathbb{N}}
\newcommand{\R}{\mathbb{R}}
\newcommand{\C}{\mathbb{C}}
\newcommand{\spt}{\mathrm{spt}}
\newcommand{\Sing}{\mathrm{Sing}}
\newcommand{\toweakstar}{\overset{*}\rightharpoonup}
\newcommand{\todown}{\downarrow}
\newcommand{\eps}{\varepsilon}
\renewcommand{\eps}{\varepsilon}
\newcommand{\vphi}{\varphi}
\renewcommand*\env@matrix[1][*\c@MaxMatrixCols c]{%
\hskip -\arraycolsep
\let\@ifnextchar\new@ifnextchar
\array{#1}}
\DeclareMathOperator{\Lip}{Lip}
\newcommand{\mres}{\mathbin{\vrule height 1.6ex depth 0pt width
	0.13ex\vrule height 0.13ex depth 0pt width 1.3ex}}
\def\vint_#1{\mathchoice%
{\mathop{\kern 0.2em\vrule width 0.6em height 0.69678ex depth -0.58065ex
		\kern -0.8em \intop}\nolimits_{\kern -0.4em#1}}%
{\mathop{\kern 0.1em\vrule width 0.5em height 0.69678ex depth -0.60387ex
		\kern -0.6em \intop}\nolimits_{#1}}%
{\mathop{\kern 0.1em\vrule width 0.5em height 0.69678ex depth -0.60387ex
		\kern -0.6em \intop}\nolimits_{#1}}%
{\mathop{\kern 0.1em\vrule width 0.5em height 0.69678ex depth -0.60387ex
		\kern -0.6em \intop}\nolimits_{#1}}}
\newcommand*{\RangeX}{%
{%
	\mathpalette\@RangeOf{X}%
}%
}
\newcommand*{\@RangeOf}[2]{%
\sbox0{$\m@th#1\mathsf{#2}$}%
\mathsf{#2}%
\kern-\wd0 %
\mkern2.75mu\relax
\nonscript\mkern.25mu\relax
\mathsf{#2}%
}
\newcommand{\aveint}[2]{\mathchoice%
{\mathop{\kern 0.2em\vrule width 0.6em height 0.69678ex depth -0.58065ex
		\kern -0.8em \intop}\nolimits_{\kern -0.45em#1}^{#2}}%
{\mathop{\kern 0.1em\vrule width 0.5em height 0.69678ex depth -0.60387ex
		\kern -0.6em \intop}\nolimits_{#1}^{#2}}%
{\mathop{\kern 0.1em\vrule width 0.5em height 0.69678ex depth -0.60387ex
		\kern -0.6em \intop}\nolimits_{#1}^{#2}}%
{\mathop{\kern 0.1em\vrule width 0.5em height 0.69678ex depth -0.60387ex
		\kern -0.6em \intop}\nolimits_{#1}^{#2}}}
\newcommand{\restr}{\mres}
\newcommand{\pbf}{\mathbf{p}}
\newcommand{\defeq}{\vcentcolon = }
\newcommand{\Fsing}{\Ffrak_Q(T)}
\title[]{Flat interior singularities for area almost-\\minimizing currents}
\date{\today}
\author[M. Goering]{Max Goering}
\address{Max Planck Institute, Inselstraße 22, 04103 Leipzig, Germany}
\email{\href{mailto:max.goering@mis.mpg.de}{max.goering@mis.mpg.de}}
\author[A. Skorobogatova]{Anna Skorobogatova}
\address{Department of Mathematics, Fine Hall, Princeton University, Washington Road, Princeton, NJ 08540, USA}
\email{\href{mailto:as110@princeton.edu}{as110@princeton.edu}}
\begin{document}

\maketitle

\begin{abstract}
	    	    The interior regularity of area-minimizing integral currents and semi-calibrated currents has been studied extensively in recent decades, with sharp dimension estimates established on their interior singular sets in any dimension and codimension. In stark contrast, the best result in this direction for general almost-minimizing integral currents is due to Bombieri in the 1980’s, and demonstrates that the interior regular set is dense. 
          
          The main results of this article show the sharpness of Bombieri's result by constructing two families of examples of area almost-minimizing integral currents whose flat singular sets contain any closed, empty interior subset $K$ of an $m$-dimensional plane in $\R^{m+n}$. The first family of examples are codimension one currents induced by a superposition of $C^{k,\alpha_{*}}$ graphs with $K$ contained in the boundary of their zero set. The second family of examples are two dimensional area almost-minimizing integral currents in $\mathbb{R}^4$, whose set of branching singularities contains $K$.
\end{abstract}

\section{Introduction and main results}

Suppose that $T$ is an $m$-dimensional integral current in in $\R^{m+n}$. Recall that a point $x \in \spt(T)$ is called an \emph{interior regular point} if there is a ball $\Bbf_r (x)$ and there exists an $\alpha > 0$ so that  $\spt(T)$ is a $C^{1,\alpha}$ embedded submanifold of $\R^{m+n}$ without boundary in $\Bbf_r (x)$. A point $x \in \spt(T) \setminus \spt(\partial T)$ is called an \emph{interior singular point} if it is not regular. The set of interior singular points is denoted by $\Sing (T)$. Given an open subset $U \subset \R^{m+n}$, $T$ is called area-minimizing in $U$ if it satisfies
\[
    \|T\|(U) \leq \|T+\partial S\|(U),
\]
for any $(m+1)$-dimensional integral current $S$ supported in $U$, where $\|T\|$ denotes the $m$-dimensional mass measure induced by $T$.
Questions about the regularity of an $m$-dimensional area-minimizing integral current $T$ have fundamentally shaped the development of geometric measure theory and calculus of variations. In codimension $1$, i.e., when $n = 1$, the works of Allard, Almgren, Bombieri, De Giorgi, Giusti, Federer, Fleming, and Simons show that the Hausdorff dimension of $\Sing (T)$ is at most $m-7$, see \cite{maggi2012sets} for more detailed references.

In higher codimension, the situation is much more delicate. The monograph of Almgren \cite{Almgren_regularity} showed that the Hausdorff dimension of $\Sing (T)$ is at most $m-2$. This dimension bound is sharp in light of examples arising from holomorphic varieties with branching singularities, such as
\[
	\{w^Q = z^p : (z,w)\in \mathbb C^2\}\, ,
\]
where $p>Q\geq 2$ are coprime integers. Almgren's theory has since been simplified and made more transparent in the series of works~\cite{DLS_MAMS, DLS_multiple_valued, DLS14Lp, DLS16centermfld, DLS16blowup}.

In this article we study the regularity of area almost-minimizers. We say that an $m$-dimensional integral current $T$ is $(C_{0},2\alpha, R_{0})$-almost minimizing in an open subset $U\subset\R^{m+n}$ if it has the following property: for all $r \le R_{0}$ and $\Bbf_{r}(x_{0}) \Subset U$, 

\begin{equation}\label{e:am}
\|T\|(\Bbf_{r}(x_{0})) \le \|T + \partial S\|(\Bbf_{r}(x_{0})) + C_{0} r^{m+2 \alpha},
\end{equation}
for all $(m+1)$-dimensional integral currents $S$ with $\spt(S) \Subset \Bbf_{r}(x_{0})$. Extending \cite{Almgren-elliptic} from almost minimal varifolds to almost area-minimizing currents, Bombieri showed in \cite{Bombieri82} that the interior regular set is dense for any almost-minimizer with the following property in place of \eqref{e:am}: given a fixed compact subset $F$ of $U$, for all $r \le R_{0}$ and $\Bbf_{r}(x_{0}) \Subset F$ it holds that
\begin{equation}\label{e:am-Bombieri}
    \|T\|(\Bbf_{r}(x_{0})) \le \|T + \partial S\|(\Bbf_{r}(x_{0})) + C_{0} r^{2 \alpha}\|T + \partial S\|(F)
\end{equation}
for all $(m+1)$-dimensional integral currents $S$ with $\spt(S) \Subset \Bbf_{r}(x_{0})$. \footnote{In fact, Bombieri demonstrated this for a larger class of gauge functions $r\mapsto\omega(r)$ that satisfy a suitable Dini condition, which in particular includes $r\mapsto r^{2\alpha}$ for $\alpha>0$. This is however compensated by his definition of the regular set being that where $\spt(T)$ is locally merely $C^1$ in place of $C^{1,\alpha}$.}

This article shows that Bombieri's result cannot generally be strengthened, and demonstrates the impossibility of extending the known regularity theory for area minimizers to the class of $(C_0,2\alpha,R_0)$-almost minimizing currents. This is done by two {types of example} for which the collection of flat singularities of multiplicity $Q$, denoted by $\Ffrak_Q(T)$, is large. In general, it appears that the notion of $(C_{0},2\alpha, R_{0})$-almost minimality cannot be immediately compared to that introduced by Bombieri. However, our examples satisfy both notions of almost-minimality (see Remark \ref{r:Bom}). It is well-known that in the codimension $1$ and multiplicity $1$ setting (i.e., when $n=1=Q$), the singular set for area almost-minimizers satisfies the same dimension bounds as minimizers \cite{Tamanini84}; see also \cite{maggi2012sets} for a nice exposition on this vein of results. The main machinery in those settings is a ``flat implies smooth" $\eps$-regularity argument, which breaks down in higher codimension and in the presence of higher multiplicities. So, necessarily, both types of almost-minimizing integral currents constructed here will have a large {set of interior flat singularities}.

The first type of example, giving rise to Theorem \ref{t:graphs}, is a superposition of single-valued $C^{k,\alpha_*}$ graphs $\{f_{i}\}_{i=1}^{Q}$, for any $k\in \N$, $\alpha_{*} \in (0,1]$, inducing a {$(C_{0}, 2 \frac{k+\alpha_{*}-1}{k+\alpha_{*}}, R_{0})$-almost minimizing} $m$-dimensional integral current {T} in $\R^{m+1}$. {In this example, we prescribe a compact set with empty interior $K$ in the domains of $f_{i}$ so that $\Ffrak_Q(T)$ contains the set $K$.} Theorem \ref{t:graphs} is heavily motivated by the recent work of \cite{david2023branch} which demonstrates that almost-minimizers of the $2$-phase Bernoulli free boundary problem can have a larger set of branch points than that expected for minimizers. In that context, branch points are regular points of the free boundaries, but are points at which the two free boundaries have no better regularity than $C^{1,\alpha}$ and meet tangentially to form a ``cusp-like" structure.  

Bombieri's result \cite{Bombieri82} can be rephrased as saying that for any area almost-minimizing integral current $T$ (in the sense \eqref{e:am-Bombieri}), $\Sing(T)$ is a relatively closed subset of $\spt(T)$ with relatively empty interior. Theorem \ref{t:graphs} demonstrates that one may prescribe $\Sing(T)$ to contain an arbitrary such set embedded in $\R^{m} \times \{0\}^{n}$. 

\begin{theorem} \label{t:graphs}
Let $Q \in \N_{\ge 2}$, $\alpha \in (0,1)$, and $K \subset \R^{m} \times \{0\} \subset \R^{m+1}$ be a closed set with relatively empty interior. Choose $k \in \N$ and $\alpha_{*} \in (0,1]$ so that $\alpha = \frac{k + \alpha_{*}-1}{k+\alpha_{*}}$. Then there exists $C_{0}, R_{0} > 0$ depending only on $m$, $k$ and $\alpha_{*}$ and there exists an $m$-dimensional integral current $T = \Gbf_F$ where $F=\sum_{i=1}^{Q} \llbracket f_{i} \rrbracket$ with $f_{i} \in C^{k,\alpha_*}(\R^{m};\R)$ such that $T$ is $(C_{0}, 2 \alpha, R_{0})$-almost minimizing in $\R^{m+1}$ and has a singular set satisfying $\Sing(T) = \Fsing \supset K$.
\end{theorem}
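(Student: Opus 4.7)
The plan is to build $T$ as a superposition of $Q$ vertically rescaled copies of the graph of a single scalar function. The key ingredient is a model function $h\in C^{k,\alpha_*}(\R^m;[0,1])$ with $h^{-1}(0)=K$ that is quantitatively flat on $K$: I require
\[
c\,\dist(y,K)^{k+\alpha_*}\le h(y)\le C\,\dist(y,K)^{k+\alpha_*}
\]
together with $|D^jh(y)|\le C\,\dist(y,K)^{k+\alpha_*-j}$ for $1\le j\le k$ and $\|h\|_{C^{k,\alpha_*}(\R^m)}\le C$ with universal constants. Such an $h$ is produced via the Whitney decomposition of $\R^m\setminus K$ into cubes $Q_\beta$ with sidelengths $\ell_\beta\sim\dist(Q_\beta,K)$, together with an adapted smooth partition of unity $\{\phi_\beta\}$ satisfying $|D^j\phi_\beta|\lesssim\ell_\beta^{-j}$, by setting $h:=\sum_\beta\ell_\beta^{k+\alpha_*}\phi_\beta$. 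Fix $Q$ distinct reals $c_1<\cdots<c_Q$, set $f_i:=c_ih\in C^{k,\alpha_*}(\R^m;\R)$, and let $T:=\sum_{i=1}^Q\db{f_i}$.

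Verifying the singular-set description is then direct. The vanishing of $D^jh$ on $K$ for all $0\le j\le k$ makes every $f_i$ tangent to $\R^m\times\{0\}$ at each $p\in K$, so the blow-ups of $T$ at $(p,0)$ converge to $Q\db{\R^m\times\{0\}}$; since $Q\ge 2$ distinct graphs meet at $(p,0)$ with a common tangent plane, $\spt(T)$ is not a $C^{1,\alpha}$ embedded submanifold there, placing $(p,0)\in\Sing(T)\cap\Ffrak_Q(T)$. Off $K$ the strict positivity of $h$ makes the graphs pairwise disjoint, so $\spt(T)$ is locally a single $C^{k,\alpha_*}\subset C^{1,\alpha}$ embedded submanifold, hence regular. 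This yields $\Sing(T)=K\times\{0\}=\Ffrak_Q(T)\supset K$.

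The core of the proof is the almost-minimality estimate, which I would establish via constant-coefficient calibrations chosen according to the position of $x_0$ relative to $K$. For any $\Bbf_r(x_0)\Subset\R^{m+1}$ with $r\le R_0$ and competitor $T+\partial S$ with $\spt(S)\Subset\Bbf_r(x_0)$, it suffices to produce a constant-coefficient $m$-form $\omega$ on $\R^{m+1}$ with comass at most one (hence closed) satisfying
\[
\|T\|(\Bbf_r(x_0))-(T\restr\Bbf_r(x_0))(\omega)\le C_0\,r^{m+2\alpha};
\]
then $\partial S(\omega)=S(d\omega)=0$ together with the comass bound yield $\|T+\partial S\|(\Bbf_r(x_0))\ge ((T+\partial S)\restr\Bbf_r(x_0))(\omega)=(T\restr\Bbf_r(x_0))(\omega)$, from which the desired inequality follows. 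Let $p$ denote the projection of $x_0$ onto $\R^m$ and split at the threshold $\dist(p,K)=A_0\,r^{1/(k+\alpha_*)}$, with $A_0$ chosen large in terms of $\min_i|c_{i+1}-c_i|$ and the constants in the $h$-estimates. In the \emph{near} regime $\dist(p,K)\le A_0\,r^{1/(k+\alpha_*)}$, take $\omega:=dx_1\wedge\cdots\wedge dx_m$: every $y\in B_r(p)$ satisfies $\dist(y,K)\lesssim r^{1/(k+\alpha_*)}$, hence $|\nabla f_i(y)|\lesssim r^\alpha$, and summing the elementary inequality $\sqrt{1+|\nabla f_i|^2}-1\le\tfrac12|\nabla f_i|^2$ over $i$ produces excess $\lesssim Q\,r^{m+2\alpha}$. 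In the \emph{far} regime $\dist(p,K)>A_0\,r^{1/(k+\alpha_*)}$, the separation $|f_i-f_j|\gtrsim h\gtrsim r$ on $B_r(p)$ forces only one graph $\db{f_{i_0}}$ to meet $\Bbf_r(x_0)$; take $\omega$ to be the volume form of the tangent plane of $\gr(f_{i_0})$ at $(p,f_{i_0}(p))$, viewed as a constant $m$-form on $\R^{m+1}$. A pointwise expansion bounds the integrand by $C|\nabla f_{i_0}(y)-\nabla f_{i_0}(p)|^2$, which via the Taylor expansion of $\nabla h\in C^{k-1,\alpha_*}$ is $\lesssim r^{2\min(1,k-1+\alpha_*)}$, giving excess $\lesssim r^{m+2\alpha}$ since $\alpha\le\min(1,k-1+\alpha_*)$.

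The main obstacle is the construction of $h$: producing simultaneous matching upper and lower bounds while preserving $C^{k,\alpha_*}$ regularity across $K$, the delicate point being Hölder continuity of $D^kh$, which requires the Whitney partition to be chosen so that $D^kh$ inherits the correct modulus of continuity matching $\alpha_*$. The specific exponent $\alpha=(k+\alpha_*-1)/(k+\alpha_*)$ is then forced by balancing the two calibration regimes at the crossover scale $\dist(p,K)\sim r^{1/(k+\alpha_*)}$, at which $|\nabla h|\sim r^\alpha$ and $h\sim r$ hold simultaneously.
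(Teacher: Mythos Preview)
Your overall strategy is sound and close in spirit to the paper's: build the sheets as scalar multiples of a single Whitney-type function and deduce almost-minimality by comparison with a well-chosen plane. The Whitney construction $h=\sum_\beta \ell_\beta^{k+\alpha_*}\phi_\beta$ is exactly the paper's $\eta_{k,\alpha_*,\cdot}$ of Lemma~\ref{t:uniformity}, and your $f_i=c_i h$ matches the paper's $f_i=\tfrac{i}{4Q\Lip(\eta)}\eta$. Where you diverge is in the almost-minimality proof: you run an explicit near/far calibration with constant $m$-forms, whereas the paper verifies the single pointwise inequality $|\nabla f_i-\nabla f_j|\lesssim|f_i-f_j|^\alpha$ (immediate from \eqref{e:i}--\eqref{e:ii}) and feeds it into the general-purpose Proposition~\ref{c:almostmin}, which reparameterizes \emph{all} sheets over the tangent plane of one of them and bounds the total Dirichlet energy there. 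Your route is more self-contained and avoids the reparameterization appendix; the paper's black box is more reusable (it is invoked again for the branched construction in Theorem~\ref{t:main-branch} and is formulated in arbitrary codimension).

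There is, however, a genuine gap in your construction of $h$. You ask simultaneously for $h\in C^{k,\alpha_*}(\R^m;[0,1])$ with $\|h\|_{C^{k,\alpha_*}}\le C$ and for the lower bound $h(y)\ge c\,\dist(y,K)^{k+\alpha_*}$; these are incompatible whenever $\sup_y\dist(y,K)=\infty$, which the theorem allows (take $K$ compact). Your Whitney sum over $\R^m\setminus K$ then has unbounded $\ell_\beta$, so $h$ and $|D^jh|\sim\dist(\cdot,K)^{k+\alpha_*-j}$ are unbounded for every $j<k+\alpha_*$. This breaks your far-regime estimate when $k\ge 2$: the claimed bound $|\nabla f_{i_0}(y)-\nabla f_{i_0}(p)|\lesssim r$ needs a \emph{uniform} Lipschitz constant for $\nabla h$, i.e.\ $\|D^2h\|_\infty<\infty$, which fails at points $p$ far from $K$. (For $k=1$ the seminorm $[\nabla h]_{C^{0,\alpha_*}}$ is genuinely uniform, so your argument survives there; and your ``only one sheet meets $\Bbf_r(x_0)$'' claim survives in all cases.) The paper handles this by performing the Whitney decomposition relative to $E\coloneqq K\cup\{\dist(\cdot,K)\ge 1\}$ instead of $K$, forcing $\dist(\cdot,E)\le 1$ and making all derivative bounds uniform. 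The cost is that $\Sing(T)=\partial E\supsetneq K$ in general, but the statement only asks for $\Sing(T)\supset K$. With this single modification your calibration proof goes through.
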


Letting $\alpha=\frac{k+\alpha_*-1}{k+\alpha_*}$ and choosing $k$ arbitrarily large, Theorem \ref{t:graphs} produces a $(C_{0}, 2 \alpha, R_{0})$-almost minimizing integral current with a large singular set for $\alpha$ arbitrarily close to $1$. However, Theorem \ref{t:graphs} leaves open the question of whether or not there is a $(C_{0}, 2, R_{0})$-almost minimizing integral current with large singular set.

\begin{remark}\label{r:Q=1}
    Note that when $Q=1$, the integral current induced by \emph{any} $C^{1,\alpha}$ graph of an $\R^n$-valued function is $(C_0,2\alpha, \infty)$-almost minimizing in $\R^{m+n}$ for some $C_0,R_0>0$. However, no such current has any singularities, since by definition, there exists a neighborhood of every point where $\spt(T)$ is a $C^{1,\alpha}$ embedded submanifold of $\R^{m+n}$.
\end{remark}

In light of Remark \ref{r:Q=1}, Theorem \ref{t:graphs}, and \cite{Tamanini84}, one could reasonably wonder if any area almost-minimizing integral current is a superposition of sheets each with better regularity than the full current, at least up to a small singular set. In Theorem \ref{t:main-branch}, we show that this is not necessarily possible by constructing a family of area almost-minimizers which patch together re-scaled and translated cut-offs of the current induced by the branched holomorphic variety $\{z^{Q} = w^{Qk+1}\}$ in such a way that the branching singularities accumulate toward an arbitrary relatively closed subset with empty interior. 

\begin{theorem}\label{t:main-branch}
Let $Q \in \N_{\ge 2}, k \in \N$, and  $\alpha \defeq \frac{Qk+1-Q}{Qk+1}$. There exists $C_{0}, R_{0} > 0$ depending only on $k$ and $Q$, so that for any closed $K \subset \R^{2}\times\{0\}\subset\R^4$ with empty interior, there exists a $(C_{0}, 2 \alpha, R_{0})$-almost minimizing $2$-dimensional integral current $T$ in $\R^{4}$ with a (genuinely) branched singular set $\Sing(T) = \Fsing \supset K$.
\end{theorem}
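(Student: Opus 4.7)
The strategy is to patch together rescaled and translated cut-offs of the holomorphic branched variety $V_0\defeq\{z^Q=w^{Qk+1}\}\subset\C^2\cong\R^4$, which is area-minimizing since it is calibrated by the K\"ahler form. Fix a Whitney-type collection $\{D_i\}_{i\in I}$ of pairwise-disjoint closed disks $D_i\subset(\R^2\times\{0\})\setminus K$ centered at $x_i$ with radii $r_i$ comparable to $\dist(x_i,K)$, chosen so that every $x\in K$ is an accumulation point of the centers and the standard packing estimate $\sum_{D_i\subset B_\rho(x_0)}r_i^2\le C\rho^2$ holds for every disk $B_\rho(x_0)\subset\R^2$. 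In particular, $r_i\to 0$ as $x_i\to K$.

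On each $D_i$ we overlay a scaled, translated cut-off of $V_0$ parameterized by
\[
    \Psi_i(\zeta)\defeq\bigl(x_i+\zeta^Q,\;\chi(r_i^{-1}|\zeta|^Q)\,\zeta^{Qk+1}\bigr),\qquad \zeta\in\overline{D_{r_i^{1/Q}}}\subset\C,
\]
where $\chi\in C^\infty([0,\infty);[0,1])$ satisfies $\chi\equiv 1$ on $[0,1/2]$ and $\chi\equiv 0$ on $[1,\infty)$. The $Q$-fold branched covering $\zeta\mapsto x_i+\zeta^Q$ encodes the nontrivial monodromy of $V_0$, while the cutoff factor, being a function of $|w-x_i|=|\zeta|^Q$ alone (hence single-valued on $D_i$), forces $\Psi_i(\zeta)\in\R^2\times\{0\}$ at $|\zeta|=r_i^{1/Q}$. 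The induced integer-multiplicity rectifiable current $T_i\defeq(\Psi_i)_\sharp\llbracket D_{r_i^{1/Q}}\rrbracket$ coincides with the translate of $V_0$ on $\{|w-x_i|\le r_i/2\}\times\C_z$ and satisfies $\partial T_i=Q\llbracket\partial D_i\times\{0\}\rrbracket$. Setting
\[
    T\defeq\sum_{i\in I} T_i + Q\llbracket(\R^2\setminus\textstyle\bigcup_i D_i)\times\{0\}\rrbracket,
\]
the boundary contributions cancel on each $\partial D_i$, producing a locally integral $2$-current in $\R^4$ with $\partial T=0$.

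Each branch point $(x_i,0)$ is a genuine branched singularity of $T_i$ with flat tangent cone $Q\llbracket\R^2\times\{0\}\rrbracket$. For any $x\in K$, the decay $r_i\to 0$ near $K$ ensures that every blow-up of $T$ at $(x,0)$ converges to the multiplicity-$Q$ flat plane, yet every neighborhood of $(x,0)$ contains branch points; hence $(x,0)\in\Ffrak_Q(T)\setminus\Reg(T)$, so $\Sing(T)=\Ffrak_Q(T)\supseteq K\times\{0\}$. For the almost-minimizing estimate, the area formula applied to $\Psi_i$ (holomorphic outside the cutoff annulus, with gradient bounded by $r_i^{k-1+1/Q}$ inside it) gives the per-patch excess over the flat $Q$-disk,
\[
    \|T_i\|(\R^4)-Q|D_i|\le C\,r_i^{2(Qk+1)/Q}=C\,r_i^{2k+2/Q}.
\]
Since $V_0$ is area-minimizing and the $Q$-fold flat plane is calibrated by the area form of $\R^2\times\{0\}$, the mass savings of any competitor $T+\partial S$ with $\spt(S)\subset B_\rho(x_0)$ must come entirely from the cutoff annuli of patches meeting $B_\rho(x_0)$; summing and applying Whitney packing,
\[
    \sum_{D_i\cap\pi_w(B_\rho(x_0))\neq\varnothing} r_i^{2k+2/Q}\le\rho^{2k-2+2/Q}\sum_i r_i^2\le C\rho^{2k+2/Q}\le C_0\,\rho^{2+2\alpha},
\]
the final inequality holding for any $\alpha\le k-1+1/Q$, which in particular accommodates the theorem's value $\alpha=(Qk+1-Q)/(Qk+1)$.

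The chief technical obstacle is the cut-off construction itself: the nontrivial monodromy of $V_0$---a cyclic permutation of the $Q$ sheets around the branch point---precludes any naive graphical interpolation to the trivial $Q$-fold flat plane. Lifting the cutoff to the $Q$-fold covering space via the parametrization $\Psi_i$ circumvents this cleanly, since $\chi(r_i^{-1}|\zeta|^Q)$ becomes a single-valued smooth function upstairs and its vanishing at $|\zeta|=r_i^{1/Q}$ forces all $Q$ sheets to merge into the $Q$-fold flat plane precisely on $\partial D_i$. The remaining tasks---verifying integer rectifiability and the boundary formula for $T_i$, controlling the excess uniformly across patches, and handling balls $B_\rho(x_0)$ centered inside the cutoff annulus of a single patch (where $T$ is a smooth graph of small Lipschitz constant and classical comparison estimates apply)---are technical but routine.
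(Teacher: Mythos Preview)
Your construction is the paper's: Whitney disks $D_i\subset\pi_0\setminus K$ with $r_i\simeq\dist(x_i,K)$, each carrying a rescaled, radially cut-off copy of $\{z^Q=w^{Qk+1}\}$, glued to the $Q$-fold plane. Your treatment of the monodromy via the parametrization upstairs is correct and is exactly how the paper handles it.

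The gap is in the almost-minimality estimate, specifically in the case you label ``routine''. Your packing bound $\sum_{D_i\cap B_\rho\neq\varnothing} r_i^{2k+2/Q}\le C\rho^{2k+2/Q}$ is valid only when every relevant patch has $r_i\lesssim\rho$; it gives nothing useful when $\Bbf_\rho(x_0)$ lies inside a single patch $D_i$ with $r_i\gg\rho$. In that regime, away from the branch point $x_i$, each sheet has gradient of size $|w-x_i|^{k-1+1/Q}$, which can be far larger than $\rho^\alpha$; the projection lower bound $\|T+\partial S\|\ge Q|B_\rho|$ then leaves an excess of order $|w-x_i|^{2(k-1+1/Q)}\rho^2$, not $\rho^{2+2\alpha}$. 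The fix---which is the paper's Proposition~\ref{c:almostmin} and is carried out as case~(b) of the proof---is to tilt the reference plane to the tangent of one sheet and then observe that the remaining sheets inherit the same $O(\rho^\alpha)$ gradient bound \emph{only because} the branches satisfy $|\nabla f_i-\nabla f_j|\lesssim|f_i-f_j|^\alpha$ with precisely this $\alpha$. That gradient-deviation relation, together with the uniform $C^{1,\alpha}$ control across the cutoff, is exactly what pins down the value $\alpha=(Qk+1-Q)/(Qk+1)$; it is a genuinely multi-sheeted statement and not a classical single-graph comparison. Your sketch correctly identifies the easier case and the harder case, but assigns the difficulty backwards.
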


In addition to showing that area almost-minimizing integral currents are not generally superpositions of regular surfaces like in Theorem \ref{t:graphs}, this shows that the genuine branch points of a two-dimensional area almost-minimizer can be prescribed to contain any closed subset of a two-dimensional plane with empty interior. In particular, not only is it in sharp contrast with the work of \cite{DLSS1,DLSS2,DLSS3}, where it is demonstrated that two-dimensional semi-calibrated currents have isolated singularities, but it also shows the drastic failure of the $(m-2)$-dimension bound \cite{Spolaor} for the set of genuine branching singularities for semi-calibrated currents. Semi-calibrated currents form a subclass of $(C_{0}, 1, R_{0})$-almost minimizing currents {for some $C_0,R_0>0$}, but the error from minimality has a very specific structure coming from the semi-calibration, therefore allowing for improved regularity. This article confirms that without the additional structure on the permitted error,\footnote{For example, having an elliptic PDE constraint on the current, like in the semi-calibrated case.} one cannot hope for an analogous regularity theory.

The following example demonstrates precisely how Theorem \ref{t:graphs} and Theorem \ref{t:main-branch} can be used to explicitly create an almost area-minimizing current with singular sets whose mass is arbitrarily close to that of the entire current.
\begin{example}
	Let $\{x_{i}\} \subset \pi_0\coloneqq \R^{m} \times \{0\}^{n} \subset \R^{m+n}$ enumerate the rational points, $Q =2$, and $\alpha \in (0,1)$. For each $\eps > 0$, write $B^{\eps}_{i} \coloneqq B_{\eps 2^{-i}}(x_{i})$. Then the set $K_{\eps} \defeq \pi_0 \setminus \cup_{i} B^{\eps}_{i}$ is relatively closed with empty interior in $\pi_0$, and for every $x \in \pi_0$ we have the lower estimate $|K_\eps \cap B_{r}(x)| \ge \omega_{m}\big(r^{m} -  \frac{\eps^{m}}{2^{m}-1}\big)$ for the $m$-dimensional Lebesgue measure of $K_\eps\cap B_r(x)$. 
	
	By Theorem \ref{t:graphs} (or Theorem \ref{t:main-branch} if $m=2=n$), there exists a current $T_{\eps}$ which is a $(C_{0}, 2 \alpha, R_{0})$ almost-minimizer for area whose singular set contains $K_{\eps}$ and with $C_{0}$ and $R_{0}$ independent of $\eps$. Moreover, $\|T_\eps\|(\Cbf_r(x,\pi_0)\cap K_\eps) = 2|K_\eps \cap B_{r}(x)|$ and $T_\eps$ is induced by the superposition of graphs of two Lipschitz functions whose Lipschitz constants converge to $0$ as $\eps$ converges to zero (see  Lemma \ref{t:uniformity}). Thus, the family of $(C_{0}, 2 \alpha, R_{0})$ almost-minimizers of area $\{T_{\eps}\}$ has the property that
	$$
	\lim_{r \uparrow \infty} \frac{\|T_{1/2}\| \left( \Cbf_{r}(x, \pi_0) \cap \Sing (T_{1/2}) \right)}{ \|T_{1/2}\| \left( \Cbf_{r}(x, \pi_0)  \right) }  = \lim_{\eps \downarrow 0} \frac{\|T_{\eps}\| \left( \Cbf_{1}(x, \pi_0) \cap \Sing (T_{\varepsilon}) \right)}{ \|T_{\eps}\| \left( \Cbf_{1}(x, \pi_0)\right)} = 1 \qquad \forall x \in \pi_0.
	$$
	That is, the singular set (branched singular set when $m=2=n$) has mass arbitrarily close to the entirety of $\|T_{\varepsilon}\|$ in $\Cbf_1(x,\pi_0)$ as $\varepsilon \downarrow 0$, and the same holds true for $T_{1/2}$ in $\Cbf_r(x,\pi_0)$ as $r\uparrow \infty$.
\end{example}

It is instructive to compare Theorem \ref{t:graphs} to the recent work \cite{Simon-fractal}, which shows that for any compact set $K\subset \R^{m-7}\times\{0\}\subset \R^{m+1}$, there exists a smooth metric on $\R^{m+1}$ and an $m$-dimensional stable minimal hypersurface whose singular set is $K$. A similar result in the framework of higher codimension area-minimizing {integral currents} was demonstrated by Liu in \cite{liu2021conjecture}, where it was shown that for any compact subset $K\subset \R^{m-2}$, there exists a smooth $(m+3)$-dimensional manifold $\Sigma$ and an $m$-dimensional homologically area-minimizing (calibrated) surface contained in $\Sigma$, whose interior singular set is $K$. However, in the former case, the singularities are modeled on Simons' cone, thus lying in the $(m-7)$-stratum, while in the latter case, the singularities are modeled on ``crossing-type" singularities formed by transverse self-intersections of the surface, thus lying in the $(m-2)$-stratum (cf. \cite[Remark 5]{liu2021conjecture}). In particular, these are not flat singularities. {Moreover, the ambient metric in both \cite{Simon-fractal} and \cite{liu2021conjecture} is merely smooth, not real analytic.} While stable minimal surfaces are not in general almost area-minimizing, stable minimal surfaces clearly enjoy significantly better regularity properties, in light of the contrast between \cite{W14} and this article. Nevertheless, the constructions in \cite{Simon-fractal, liu2021conjecture} demonstrate that even under more robust structural assumptions such as being area-minimizing or being a stable minimal hypersurface, fractal singular sets are permitted, albeit lower dimensional and with only a smooth ambient metric.

\subsection*{Acknowledgments}
Some of this work was completed while the authors visited SLMath (formerly MSRI), supported by the NSF grant FRG-1853993. A.S. acknowledges further support from FRG-1854147 and thanks Camillo De Lellis for his helpful comments and Reinaldo Resende for his interest and for numerous useful discussions.

\section{Background and preliminaries}
\subsection{Notation and background} \label{s:notation}

Throughout, we will consider $m$-dimensional integral currents in $\R^{n+m}$ which are induced by graphs of functions which are $C^{k,\alpha_{*}}$ for some $m,n,k \in \N$ and $\alpha_{*} \in (0,1]$. We let $C, C_0, C_1, \dots$  denote constants, whose dependencies  will be given when they are introduced. We will at times use the notation $\lesssim$ and $\simeq$, to suppress multiplicative constants. {If these constants
depend only on $m$ and $n$, there will be no subscript, meanwhile we include any other dependencies in subscripts.} {We let $\pi, \pi_{i}, \varpi$ denote $m$-dimensional planes {(namely, affine subspaces)} in $\R^{m+n}$, while and $\pi^{\perp}$ denotes the $n$-dimensional plane orthogonal to $\pi$. We denote by $\mathbf{p}_{\pi}: \mathbb R^{m+n}\to \pi$ the} orthogonal projection onto $\pi$, while $\mathbf{p}_{\pi}^\perp$ denotes the orthogonal projection onto $\pi^\perp$. $\Bbf_r(x)$ {and $\overline{\Bbf_r(x)}$ will respectively denote an open and closed} $(m+n)$-dimensional Euclidean ball of radius $r$ centered at $x$. {We let $B_r(x,\pi)$, $\overline{{B}_r(x,\pi)}$ respectively denote the open and closed $m$-dimensional disks $\Bbf_r(x)\cap \pi$ and $\overline{\Bbf_r (x)} \cap \pi$} of radius $r$ centered at $x$ in a given $m$-dimensional plane $\pi\subset\R^{m+n}$ passing through $x$; the dependency on the plane will be omitted if clear from context. We let $\Cbf_r(x,\pi)$ denote the $(m+n)$-dimensional cylinder $B_r(\mathbf{p}_\pi(x),\pi)\times \pi^\perp$. Given an $m$-dimensional current $T$, $\|T\|$ denotes the mass measure induced by $T$, while $\partial T$ denotes the $(m-1)$-dimensional current which is the boundary of $T$ and $\spt(T)$ denotes the support of the current. We use $\omega_m$ to denote the $m$-dimensional Hausdorff measure of the $m$-dimensional unit disk.

As an abuse of notation which should be easily discernible from context, $|\cdot|$ will denote the Euclidean norm of vectors, the norm induced by the metric on the Grassmannian $G(m,m+n)$ of $m$-dimensional linear subspaces of $\R^{m+n}$, and also the $m$-dimensional Lebesgue measure of subsets of a given $m$-dimensional plane. For a matrix $B$, we let $B^t$ denote its transpose.  $\Acal_Q(\pi^\perp)$ denotes the space of $Q$-tuples of vectors in $\pi^\perp$ (cf. \cite{DLS_MAMS}). We use the notation $\nabla$ for the gradient of a function on an $m$-dimensional plane, with respect to a canonical orthonormal choice of coordinates on that plane. Given Lipschitz functions $f:\Omega\to \pi_0^\perp$ and $F:\Omega\to \Acal_Q(\pi_0^\perp)$ on an open subset $\Omega$ of an $m$-dimensional plane $\pi_0\subset\R^{m+n}$, $\mathrm{gr}(f)\subset\R^{m+n}$ denotes the graph of $f$, while $\Gbf_F$ denotes the $m$-dimensional current induced by the multi-graph of $F$ (cf. \cite[Definition 1.10]{DLS_multiple_valued}). Given measurable functions $f_1,\dots,f_Q:\Omega\to \pi_0^\perp$, we write $F=\sum_{i=1}^Q \llbracket f_i \rrbracket$ for the $Q$-valued map with a decomposition into $f_i$ (cf. \cite[Definition 1.1]{DLS_MAMS}). {For a constant $\Lambda>0$, we let $\Lip_\Lambda$ denote the space of Lipschitz functions with Lipschitz constant bounded above by $\Lambda$.} We use $\supp(f)$ to denote the support of functions $f$.

\subsection{Key preliminary results}\label{s:prelim}

In this section we prove an important preliminary result (Proposition \ref{c:almostmin}), demonstrating that controlling pairwise gradient deviations for a superposition of $C^{1,\alpha}$ graphs guarantees a $(C_0, 2\alpha, R_0)$-almost minimality estimate in a ball of any sufficiently small radius in $\R^{m+n}$. This will be a key tool for proving both Theorem \ref{t:graphs} and Theorem \ref{t:main-branch}.

We first state the following important lemma, which not only will be crucial in the proof of  Proposition \ref{c:almostmin} below, but is in addition a powerful tool in its own right, due to the flexibility inherent in the domain $\Omega^{\prime}$ being any open set and the simplicity of verifying the hypotheses therein for explicitly constructed sets $\Omega^{\prime}$. Indeed, it will also be used independently in the proof of Theorem \ref{t:main-branch}.

\begin{lemma} \label{l:almostmin}
	Fix $q \in \N$ and $\alpha \in (0,1)$. Let $\pi\subset\R^{m+n}$ be an $m$-dimensional plane. Let $\Omega'$ be an open subset of $\pi$. Suppose that there exist $g_{i},\dots,g_q \in C^{1}( \Omega' ;\pi^\perp)$, points $\{x_{i}\}_{i=1}^{q} \subset \Omega^{\prime}$ and constants $C_{1},C_{2}, C_{3} > 0$ so that: 
	\begin{itemize}
		\item[(i)] $|\nabla g_{i_0}(x_{i_0})| \leq C_1 (\diam\Omega')^\alpha$ for some $i_0\in\{1,\dots,q\}$;
		\item[(ii)] for all $i\neq j$,
		\begin{equation}\label{e:C1alpha}
			|\nabla g_i(x_i) -\nabla g_j(x_i)| \le C_2 (\diam\Omega')^\alpha;
	    \end{equation}
        \item[(iii)] and  for all $y,z \in \Omega^{\prime}$, 
            $$
            | \nabla g_{i}(y) - \nabla g_{i}(z)| \le C_{3} (\diam \Omega^{\prime})^\alpha.
            $$
  \end{itemize}
  
	 Then there exists $\bar{C}=\bar{C}(C_1,C_{2}, C_{3})$ such that
    \begin{equation}\label{e:Dir}
    	\sum_{i=1}^{q}\int_{\Omega'} |\nabla g_i|^2 \leq \bar C q (\diam\Omega')^{2\alpha}|\Omega'|. 
    \end{equation}
\end{lemma}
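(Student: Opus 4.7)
The plan is to establish a pointwise bound of the form $|\nabla g_i(y)| \leq C(\diam \Omega')^\alpha$ uniformly in $i\in\{1,\dots,q\}$ and $y\in\Omega'$, and then simply square and integrate over $\Omega'$ and sum over $i$. The estimate \eqref{e:Dir} then follows immediately with $\bar{C}$ depending polynomially on $C_1,C_2,C_3$, so the content of the lemma is really just in chaining assumptions (i)--(iii) via the triangle inequality.

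To obtain the pointwise bound, I would first anchor $\nabla g_{i_0}$ using (i) and (iii): for any $y\in\Omega'$,
\[
|\nabla g_{i_0}(y)| \leq |\nabla g_{i_0}(y)-\nabla g_{i_0}(x_{i_0})| + |\nabla g_{i_0}(x_{i_0})| \leq (C_3+C_1)(\diam\Omega')^\alpha.
\]
Next, for an arbitrary $i\neq i_0$, I would estimate $|\nabla g_i(x_i)|$ by comparing to $\nabla g_{i_0}$ at $x_i$ via (ii) and then to $\nabla g_{i_0}(x_{i_0})$ via (iii):
\[
|\nabla g_i(x_i)| \leq |\nabla g_i(x_i)-\nabla g_{i_0}(x_i)| + |\nabla g_{i_0}(x_i)-\nabla g_{i_0}(x_{i_0})| + |\nabla g_{i_0}(x_{i_0})| \leq (C_2+C_3+C_1)(\diam\Omega')^\alpha.
\]
Finally, for any $y\in\Omega'$, another application of (iii) to $g_i$ gives
\[
|\nabla g_i(y)| \leq |\nabla g_i(y)-\nabla g_i(x_i)| + |\nabla g_i(x_i)| \leq (C_1+C_2+2C_3)(\diam\Omega')^\alpha.
\]

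Squaring this uniform bound, integrating over $\Omega'$, and summing over $i=1,\dots,q$ yields
\[
\sum_{i=1}^q \int_{\Omega'}|\nabla g_i|^2 \leq q\,(C_1+C_2+2C_3)^2 (\diam\Omega')^{2\alpha}|\Omega'|,
\]
so we may take $\bar{C}=(C_1+C_2+2C_3)^2$. There is no real obstacle here; the only thing to watch is that (ii) is stated only for $i\neq j$, which is why one must route through the fixed index $i_0$ from (i) rather than trying to bootstrap each $\nabla g_i(x_i)$ from itself. The hypothesis (iii) plays a dual role: it transfers the anchor bound from $x_{i_0}$ to the arbitrary point $x_i$ so that (ii) can be applied there, and it transfers pointwise control from $x_i$ to an arbitrary $y\in\Omega'$ before integration.
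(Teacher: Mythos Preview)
Your proof is correct and follows essentially the same approach as the paper: obtain a uniform pointwise bound on $|\nabla g_i|$ by chaining (i)--(iii) via the triangle inequality, then square, integrate, and sum. The only cosmetic difference is that the paper applies (ii) at the point $x_{i_0}$ rather than at $x_i$, which saves one invocation of (iii) and yields the slightly smaller constant $(C_1+C_2+C_3)^2$; this has no bearing on the argument.
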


\begin{proof}[Proof of Lemma \ref{l:almostmin}]
	Let $r\coloneqq \diam\Omega'$ and relabel indices if necessary so that property (i) holds for $i_0=1$. Then, for all $x \in \Omega'$ and all $1 \le i \le q$, we have
	\[
		|\nabla g_{i}(x)| \le |\nabla g_{i}(x) - \nabla g_{i}(x_{1})| + |\nabla g_{i}(x_{1}) - \nabla g_{1}(x_{1})| + |\nabla g_1(x_1)| \le (C_{3} + C_{2} + C_{1}) r^\alpha
	\]
	Squaring and integrating over $\Omega'$, this completes the proof.
\end{proof}

The following Proposition uses Lemma \ref{l:almostmin} to provide a sufficient condition for a $q$-valued graph to induce an area almost-minimizing integral current.

\begin{proposition} \label{c:almostmin}
    Fix $q \in \N$, $\alpha \in(0,1)$,  $r > 0$, and $x_0\in\R^{m+n}$. Denote $\R^m \equiv \R^m\times\{0\}^{n} \subset \R^{m+n}$ and $\R^n \equiv (\R^m\times \{0\}^n)^\perp$. Let $x'_0\coloneqq \pbf_{\R^m}(x_0)$ and $M > 0$. Then there exists a constant $R_{0}>0$ depending on $m,n$ and $M$ such that the following holds for each $r \leq R_0$. Suppose that $f_{1}, \dots, f_{q} \in {C^{1,\alpha}\cap \Lip_{1/4}(B_{2r}(x'_0);\R^n)}$ with $\max_i[ \nabla f_{i}]_{C^{0,\alpha}(B_{2r}(x_{0}^{\prime}))} \leq M$, {and $\gr(f_i)\cap\Bbf_r(x_0)\neq \emptyset$}. If for every pair of indices $i<j$ we have
    \begin{equation}\label{e:C1alpha.2}
		|\nabla f_i(x) -\nabla f_j(x)| \le C_4 |f_i(x)-f_j(x)|^\alpha \qquad \forall x\in B_{2r}(x_0'),
	\end{equation}
    then the multivalued function $F_1\coloneqq \sum_{i=1}^q \llbracket f_i \rrbracket$ satisfies
    \begin{equation}\label{e:alm-min-ball}
        \|\Gbf_{F_1}\|(\Bbf_r(x_0)) \leq \|\Gbf_{F_1}+\partial S\|(\Bbf_r(x_0)) + C_0 r^{m+2\alpha}
    \end{equation}
    for some constant $C_0 = C_0(C_4,m,n,q,\alpha,M)>0$, for all $(m+1)$-dimensional integral currents $S$ supported in $\Bbf_{r}(x_0)$.
\end{proposition}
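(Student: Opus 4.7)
The strategy has two parts: a projection-based competitor comparison that reduces \eqref{e:alm-min-ball} to a Dirichlet energy bound on $F_1$, followed by an application of Lemma \ref{l:almostmin} carried out in a suitably rotated reference frame.

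\textit{Reduction to a Dirichlet bound.} Let $\pi$ be the $m$-plane through $(x_0',f_1(x_0'))$ tangent to $\gr(f_1)$ there. The Lipschitz bound $\|\nabla f_1\|_\infty\leq 1/4$ forces the rotation carrying $\R^m\times\{0\}^n$ to $\pi$ to have angle at most $\arctan(1/4)$, so in the rotated coordinates the $f_i$ can be rewritten as graphs $\tilde f_i$ over $\pi$ with Lipschitz constant $\lesssim 1/2$, Hölder seminorm $[\nabla\tilde f_i]_{C^{0,\alpha}}\lesssim M$, and with \eqref{e:C1alpha.2} preserved up to a multiplicative constant depending only on the rotation angle; moreover $\nabla\tilde f_1(\tilde x_0')=0$, where $\tilde x_0'\coloneqq\pbf_\pi(x_0)$. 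Since $\Gbf_{F_1}+\partial S$ coincides with $\Gbf_{F_1}$ outside $\Bbf_r(x_0)$, setting $\Cbf\coloneqq \Cbf_r(x_0,\pi)\supset \Bbf_r(x_0)$ gives
\[
\|\Gbf_{F_1}\|(\Bbf_r(x_0))-\|\Gbf_{F_1}+\partial S\|(\Bbf_r(x_0))=\|\Gbf_{F_1}\|(\Cbf)-\|\Gbf_{F_1}+\partial S\|(\Cbf).
\]
Because $\pbf_\pi$ is $1$-Lipschitz and $\pbf_{\pi,*}\partial S=0$ (since $S$ is $(m+1)$-dimensional while $\pi$ is $m$-dimensional), we obtain $\|\Gbf_{F_1}+\partial S\|(\Cbf)\geq q\omega_m r^m$, while the Taylor expansion of the graph area element in the Lipschitz-small regime yields $\|\Gbf_{F_1}\|(\Cbf)\leq q\omega_m r^m+\tfrac12\int_{B_r(\tilde x_0',\pi)}|\nabla\tilde F_1|^2$. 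Hence \eqref{e:alm-min-ball} reduces to proving the Dirichlet bound
\[
\int_{B_r(\tilde x_0',\pi)}|\nabla\tilde F_1|^2\leq C r^{m+2\alpha}.
\]

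\textit{Dirichlet bound via Lemma \ref{l:almostmin}.} Apply the lemma to $g_i\coloneqq\tilde f_i$ on $\Omega'\coloneqq B_{2r}(\tilde x_0',\pi)$. Hypothesis (i) holds trivially with $i_0=1$, $x_{i_0}=\tilde x_0'$, since $\nabla\tilde f_1(\tilde x_0')=0$ by construction. For (ii) take $x_i=\tilde x_0'$ for every $i$: the nonemptiness of $\gr(f_i)\cap\Bbf_r(x_0)$ together with $\|\nabla f_i\|_\infty\leq 1/4$ yields $|\tilde f_i(\tilde x_0')-\tilde f_j(\tilde x_0')|\lesssim r$, and the rotated form of \eqref{e:C1alpha.2} then gives $|\nabla\tilde f_i(\tilde x_0')-\nabla\tilde f_j(\tilde x_0')|\lesssim r^\alpha\lesssim(\diam\Omega')^\alpha$. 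Finally, (iii) is immediate from the Hölder bound on $\nabla\tilde f_i$. Lemma \ref{l:almostmin} therefore yields $\int_{\Omega'}|\nabla\tilde F_1|^2\leq C r^{m+2\alpha}$ with $C=C(C_4,M,m,n,q,\alpha)$, and restricting to $B_r(\tilde x_0',\pi)\subset\Omega'$ closes the argument.

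\textit{Main obstacle.} The delicate step is the tilting used in the first part: one must verify that $\tilde f_i$ is well-defined as a graph on $B_{2r}(\tilde x_0',\pi)$ and that its Lipschitz constant, Hölder seminorm, and the constant in \eqref{e:C1alpha.2} change only by bounded multiplicative factors under the rotation. This is why $R_0=R_0(m,n,M)$ must be chosen small enough that $B_{2r}(\tilde x_0',\pi)$ remains in the image of $B_{2r}(x_0')\times\R^n$ under the rotation, and that the quadratic Taylor expansion of $\sqrt{\det(I+\nabla\tilde f_i(\nabla\tilde f_i)^t)}$ is quantitatively justified on this set.
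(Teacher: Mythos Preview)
Your proposal is correct and follows essentially the same strategy as the paper: tilt to the tangent plane of one sheet, use the projection lower bound $\|\Gbf_{F_1}+\partial S\|(\Cbf)\geq q\omega_m r^m$ together with the Taylor expansion of the area element to reduce to a Dirichlet bound, then verify the hypotheses of Lemma~\ref{l:almostmin}. The paper formalizes the tilting step via an appendix reparametrization result (Proposition~\ref{p:step1}) rather than arguing informally with rotations, but the content is the same.

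One point deserves care. You assert that \eqref{e:C1alpha.2} is ``preserved up to a multiplicative constant depending only on the rotation angle''. This is stronger than what you actually use, and it is not obvious: after tilting, $\tilde f_i(z)$ and $\tilde f_j(z)$ correspond to $f_i,f_j$ evaluated at \emph{different} base points $y_i^{-1}(z)\neq y_j^{-1}(z)$, so the pointwise inequality \eqref{e:C1alpha.2} does not transfer directly. The paper avoids this by first observing, in the \emph{original} frame, that $|f_i-f_j|\lesssim r$ on $B_{2r}(x_0')$ (from the Lipschitz bound and $\gr(f_i)\cap\Bbf_r(x_0)\neq\emptyset$), whence \eqref{e:C1alpha.2} gives $\|\nabla f_i-\nabla f_j\|_{C^0}\lesssim r^\alpha$; this $C^0$ bound \emph{does} survive the tilting (via \eqref{e:relgrad}) and is all that Lemma~\ref{l:almostmin}(ii) requires. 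Your argument for (ii) can be repaired in exactly this way, so this is an imprecision rather than a genuine gap.
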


\begin{remark} \label{r:smalllip}
If $f_{i} \in C^{1,\alpha_{*}}(\R^{m};\R^{n})$, the requirement $f_{i} \in \Lip_{1/4}(\R^{m};\R^n)$ could be removed. The reason this small Lipschitz requirement appears is due to formulating the proposition in a localized way so that information on the Hölder semi-norm at scale $2r$ implies a reparametrization at scale $r$. This is done for the sake of exposition in the proof of Theorem \ref{t:main-branch}. At the cost of making the ratio larger (and consequently $R_{0}$ smaller, see the hypothesis \eqref{e:closeenough} of Proposition \ref{p:step1}) the Lipschitz constant need only be finite, which is implied by $f_{i} \in C^{1,\alpha_{*}}(\R^{m} ; \R^{n})$.
\end{remark}

Although Proposition \ref{c:almostmin} merely requires $f_i \in C^{1,\alpha} ( B_{2r}(x_{0}^{\prime}))$, it is instructive to think of the functions $f_{i}$ in Proposition \ref{c:almostmin} as being in $C^{k,\alpha_{*}}$ for some $\alpha_*\in(0,1]$ and consider $\alpha = \frac{k+\alpha_* - 1}{k+\alpha_*}$. This is the context in which Proposition \ref{c:almostmin} will be applied when proving Theorem \ref{t:graphs} and Theorem \ref{t:main-branch}, and helps explain the presence of the exponent $\frac{k + \alpha_{*} - 1}{k+\alpha_{*}}$: if $f \in C^{k,\alpha_{*}}$ and $x'_{0} \in \R^{m} $ is such that $\partial^{\beta}f(x'_{0}) = 0$ for all multi-indices $\beta$ with $|\beta| \le k$, then

$$
\begin{cases}
    |f(x) - f(x'_{0})| \lesssim |x-x_{0}|^{k+\alpha_{*}} \\
    |\nabla f(x) - \nabla f(x'_{0})| \lesssim |x-x'_{0}|^{k + \alpha_{*} -1 } = \left( |x-x'_{0}|^{k+\alpha_{*}} \right)^{\frac{k + \alpha_{*} - 1}{k+\alpha_{*}}}.
\end{cases}
$$
See also Proposition \ref{p:dprop} for more insight on the relationship between $k,\alpha,\alpha_{*}$.

\begin{remark} \label{r:alphagap}
When $q=1$, \eqref{e:C1alpha.2} is a vacuous hypothesis. So, when $k=1$, the conclusion of Proposition \ref{c:almostmin} follows merely from the $C^{1,\alpha}$-regularity of the one function $f_i$. This recovers the sharp exponent between area almost-minimizing integral currents and $C^{1,\alpha}$ graphs, see \cite{duzaar2002optimal}.

When $q \geq 2$, the hypothesis \eqref{e:C1alpha.2} assumes roughly that despite being merely $C^{1,\alpha}$-regular, the graphs of $f_{j}$ meet tangentially in a way that their difference tangentially approaches zero like a $C^{1,\alpha_{*}}$ function (cf. Proposition \ref{p:dprop} below). When $\alpha < 1 = k$, we have $\alpha_{*} = \frac{\alpha}{2-\alpha} > \alpha = \frac{\alpha_{*}}{1+\alpha_{*}}\in (0,\frac{1}{2}]$. This means in the multi-sheeted setting, to conclude that the corresponding multi-valued graph is an area almost-minimizer with error exponent $2 \alpha$ our techniques require strictly more regularity on $f_j$ than in the single-sheeted setting. Nonetheless, this gap appears to be necessary in our methods, because it can happen that a ball $\Bbf_{r}(x_0)$ intersects two sheets $\llbracket \gr(f_{1}) \rrbracket, \llbracket \gr(f_2) \rrbracket$ that are mutually disjoint in $\Bbf_r(x_0)$, with $|\nabla f_{1}(x) - \nabla f_{2}(x)| \gg r^{\alpha}$ for all $x \in B_r(\pbf_{\pi_0}(x_0))$.
\end{remark}

The following proposition demonstrates that in codimension one, the hypothesis \eqref{e:C1alpha.2} in Proposition \ref{c:almostmin} follows from the pointwise property of pairwise tangential touching for a collection of $C^{1,\alpha_*}$ graphs. We will not make use of this proposition in the proofs of the main results. However, not only is it instructive in gaining a deeper understanding behind the relationship of the exponents $\alpha$ and $\alpha_*$, but it also allows one to conclude that \emph{any} superposition of ordered $C^{1,\alpha_*}$-graphs is area almost-minimizing; see Remark \ref{r:ordered-am} below.

\begin{proposition}\label{p:dprop}
	Fix $Q >1$, and $\alpha_*\in(0,1]$. Let $\R^m \equiv \R^m\times\{0\}\subset \R^{m+1}$ and let $\R \equiv (\R^m\times \{0\})^\perp$. Suppose that $f_{1},\dots, f_Q \in C^{1,\alpha_*}\cap \Lip_{1/4}( \R^m ;\R)$ and that $f_{i}$ satisfies the property for all $i 
 \neq j$:
	\begin{equation}\label{e:dprop}
		f_{i}(x) = f_{j}(x) \implies \nabla f_{i}(x) = \nabla f_{j}(x) \qquad \forall x\in \R^m.
	\end{equation}
	Choose $C_4 = 2 \max \{1, \max_{i\neq j}[\nabla f_i-\nabla f_j]_{C^{0,\alpha_*}} \}$. Then the functions $\{f_{i}\}$ satisfy \eqref{e:C1alpha.2} with $\alpha = \frac{\alpha_*}{1+\alpha_*}$ for all $x \in \R^m$, with this choice of $C_4$. In particular, for $F \coloneqq \sum_{i=1}^Q \llbracket f_{i} \rrbracket$, the current $\Gbf_{F}$ is a $(C_0, 2\alpha, R_0)$-almost minimizing current in $\R^{m+1}$, for $C_0$, $R_0$ given by Proposition \ref{c:almostmin}.
\end{proposition}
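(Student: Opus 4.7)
The plan is to fix a pair $i \neq j$ and work with $g \coloneqq f_i - f_j \in C^{1,\alpha_*}(\R^m;\R)$, whose Hölder seminorm $L \coloneqq [\nabla g]_{C^{0,\alpha_*}(\R^m)}$ satisfies $L \le C_4/2$ by the choice of $C_4$ (and $C_4 \ge 2$). Fix an arbitrary $x \in \R^m$. The estimate \eqref{e:C1alpha.2} is trivial when $\nabla g(x) = 0$; and when $g(x) = 0$, hypothesis \eqref{e:dprop} forces $\nabla g(x) = 0$, again making \eqref{e:C1alpha.2} trivial. So I reduce to the case that both are strictly positive, arranging $g(x) > 0$ by possibly replacing $g$ with $-g$.

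The main idea is to follow the steepest-descent curve $\gamma(t) \coloneqq x - t\,\nabla g(x)/|\nabla g(x)|$ and exploit \eqref{e:dprop} at any zero of $g \circ \gamma$. Decomposing $\nabla g(\gamma(t)) = \nabla g(x) + (\nabla g(\gamma(t)) - \nabla g(x))$ and using the Hölder control $[\nabla g]_{C^{0,\alpha_*}} \le L$ gives
$$\frac{d}{dt} g(\gamma(t)) \le -|\nabla g(x)| + L t^{\alpha_*}.$$
I would then set $t_* \coloneqq (|\nabla g(x)|/(2L))^{1/\alpha_*}$, so that the right-hand side is at most $-|\nabla g(x)|/2$ on $[0,t_*]$.

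The crucial step is to rule out any zero of $g \circ \gamma$ in $(0, t_*]$: if $g(\gamma(t_1)) = 0$ for some such $t_1$, then \eqref{e:dprop} forces $\nabla g(\gamma(t_1)) = 0$, and Hölder continuity of $\nabla g$ yields
$$|\nabla g(x)| = |\nabla g(x) - \nabla g(\gamma(t_1))| \le L t_1^{\alpha_*} \le L t_*^{\alpha_*} = |\nabla g(x)|/2,$$
contradicting $|\nabla g(x)| > 0$. Hence $g \circ \gamma > 0$ on $[0, t_*]$, and integrating the decay estimate over $[0,t_*]$ gives $g(x) \ge |\nabla g(x)|^{(1+\alpha_*)/\alpha_*}/(2 (2L)^{1/\alpha_*})$. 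Raising to the power $\alpha = \alpha_*/(1+\alpha_*)$ and using the identity $\alpha + (1+\alpha_*)^{-1} = 1$ delivers $|\nabla g(x)| \le 2 L^{1/(1+\alpha_*)} g(x)^\alpha$; a case split on whether $L \le 1$ or $L > 1$ bounds the coefficient by $2\max\{1,L\} \le C_4$, establishing \eqref{e:C1alpha.2}.

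The final ``in particular'' claim then follows from Proposition \ref{c:almostmin}: the global $C^{1,\alpha_*} \cap \Lip_{1/4}$ hypothesis on the $f_i$ yields uniform local $C^{1,\alpha}$ seminorm bounds on every ball (using the Lipschitz bound $|\nabla f_i| \le 1/4$ to absorb large separations), and \eqref{e:C1alpha.2} has just been verified pointwise on $\R^m$, so Proposition \ref{c:almostmin} applies on every $\Bbf_r(x_0) \subset \R^{m+1}$ with $r \le R_0$. The main obstacle is the choice of the threshold $t_*$ and the contradiction that excludes intermediate zeros of $g \circ \gamma$ via \eqref{e:dprop}; the remainder is routine Hölder bookkeeping.
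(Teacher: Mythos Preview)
Your proof is correct and follows essentially the same approach as the paper: both fix $g=f_i-f_j$, walk along the steepest-descent direction from $x$, use the $C^{0,\alpha_*}$ control on $\nabla g$ to keep the directional derivative bounded below along the walk, and invoke hypothesis \eqref{e:dprop} at any zero of $g$ encountered to force $\nabla g=0$ there, yielding the desired inequality with the stated constant. The only cosmetic difference is that the paper packages this as a proof by contradiction (assuming $|\nabla g(0)|>2C_g R^{\alpha_*}$ with $g(0)=R^{1+\alpha_*}$ and walking distance $R$), whereas you argue directly by walking to the threshold $t_*$ and integrating; the underlying mechanism and the resulting constant are identical.
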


We again recall that the assumption $f_i\in\Lip_{1/4}(\R^{m};\R)$ could be dropped at the expense of shrinking $R_{0}$, see Remark \ref{r:smalllip}.

\begin{remark}\label{r:ordered-am}
    If $f_{1} \le \dots \le f_{Q}$ are $C^{1,\alpha_{*}}(\R^{m} ; \R)$ functions, then they necessarily satisfy \eqref{e:dprop}. In particular, Proposition \ref{p:dprop} and Remark \ref{r:smalllip} imply that the current $T=\Gbf_F$ associated to the graph of $F = \sum_{i=1}^{Q} \llbracket f_{i} \rrbracket$ is a $(C_{0}, 2 \alpha, R_{0})$-almost minimizing current for some $C_0,R_0>0$.
\end{remark}

\begin{proof}[Proof of Proposition \ref{c:almostmin}]
 Fix a ball $\Bbf_r(x_0)$ as in the statement of the proposition. Let $\pi_{1}$ denote the $m$-dimensional tangent plane to $\Gbf_{f_1}$ at some point $(y_0,f_1(y_0))\in\Bbf_{r}(x_0)$.

    We will apply Proposition \ref{p:step1} to the plane $\varpi = \pi_{1}$ to reparametrize each $f_{i}$ over $B_{r}(\pbf_{\pi_{1}}(x_{0});\pi_{1})$ by some functions $g_{i}$, which we will show satisfying the hypotheses of Lemma \ref{l:almostmin}. Since these $g_{i}$ will have the same graph as $f_{i}$ after intersecting with $\Bbf_{r}(x_0)$, the conclusion of Lemma \ref{l:almostmin} confirms \eqref{e:alm-min-ball} and will complete our proof.

    We first check the hypotheses of Proposition \ref{p:step1}, under the assumptions 
    $$\delta= \frac{1}{2}, \quad \sigma = \frac{1}{2}\left[1 + \max_i\Lip(f_i)\right], \quad \max_i \Lip(f_i) \le \frac{1}{4}.$$ 
    Let $A : \R^{m} \to \R^{n}$ have graph parallel to $\pi_{1}$. Since $\nabla A = \nabla f_{1}(y_0)$, it follows that
    $\Lambda \coloneqq \max_i\Lip(f_{i} - A) \le 2\max_i\Lip(f_i) \leq \frac{1}{2}$. Moreover, since $\gr(f_{i}) \cap \Bbf_{r}(x_0) \neq \emptyset$ there exists $x_i\in B_r(x'_0)$ with $|(x_{i},f_{i}(x_{i})) - x_{0}| \le r$. Writing $x_{0} = (x'_0, \bar{x})$, we have $|f_{i}(x'_{0}) - \bar{x}| \le |x'_0-x_{i}| \Lip(f_{i}) + |f_{i}(x_i) - \bar{x}| \le (1+ \frac{1}{4}) r$.

    So, the choice of $\delta = \frac{1}{2}$ is indeed valid since the above assumptions yield
    $$
    \frac{1}{2} \le  \frac{1}{\sqrt{1 + \Lambda^{2}}} - \frac{\sigma \max_{i} \Lip(f_i)}{\sqrt{1+\max_{i} \Lip(f_i)^{2}}},
    $$
    confirming \eqref{e:closeenough}. Since by assumption $f_{i} \in C^{1,\alpha}(B_{2r}(x'))$ we can apply Proposition \ref{p:step1} to construct $g_{i} \in C^{1,\alpha}( B_{r}(\pbf_{\pi_{1}}(x_0));\pi_{1}); \pi_{1}^{\perp})$ so that $\gr(g_{i}) = \gr(f) \cap \Cbf_{r}(x_0;\pi_{1})$. In particular, if $G = \sum_{i=1}^{q}, \llbracket g_{i} \rrbracket$ then 
    \begin{equation} \label{e:preconstancy}
        {\Gbf_G  = \Gbf_{F_1} }\mres \Cbf_{r}(x_{0};\pi_1). 
    \end{equation}
    Thus, $	\partial \left( \left(\pbf_{\pi_{1}}\right)_{\sharp} \Gbf_{F_1} \right)\restr B_{r}(\pbf_{\pi_{1}}(x_{0})) = 0$. Hence, by \eqref{e:preconstancy} and the fact that $G$ is $q$-valued it follows that
    \begin{equation} \label{e:0thorder}
    (\mathbf{p}_{\pi_1})_\sharp\Gbf_{F_1}\mres \Cbf_r(x_0,\pi_1) = q\llbracket B_r(\mathbf{p}_{\pi_1}(x_0))\rrbracket.
    \end{equation}
    We now claim that for $r$ small enough the $g_{i}$ satisfy the hypotheses of Lemma \ref{l:almostmin}. Indeed, by our choice of $\pi_{1}$, {we have $\nabla f_{1}(y_0) = \nabla A(y_0)$, which in particular tells us that $\nabla g_1(\zeta_0)=0$ at some point $\zeta_0\in B_r(\pbf_{\pi_{1}}(x_0))$. Proposition \ref{p:step1}(2), (3), and \eqref{e:hseminorm} and the hypotheses $\max_i\Lip(f_i)\leq \frac{1}{4}$ and $\max_i [\nabla f_i]_{C^{\alpha}}\leq M$ tells us that for each $x\in B_{r}(\pbf_{\pi_{1}}(x_0))$  we have
    \begin{equation}\label{e:lemma2.1(i)}
    	|\nabla g_1(x)| \lesssim_\alpha [\nabla g_1]_{C^{\alpha}} r^\alpha \leq C [\nabla f_1]_{C^{\alpha}} r^\alpha \leq C_1 r^\alpha,
    \end{equation}
    where $C_1= C_1(\Lip(f_1),m,n,\alpha,M)$. This confirms hypothesis (i) of Lemma \ref{l:almostmin}.
    
{Now, for any $x\in B_{2r}(x'_0)$ we have
    \begin{align*}
    	|\nabla f_i(x) - \nabla f_j(x)| &\lesssim_{ C_4}|f_i(x) - f_j(x)|^\alpha \\
     & \lesssim_{\alpha} |f_i(x) - f_i(x_i)|^\alpha + |f_i(x_i)-f_j(x_j)|^\alpha + |f_j(x_j)-f_j(x)|^\alpha \\
    	&\le (1+\max_i \Lip(f_i)) r^\alpha \lesssim r^\alpha
	\end{align*}
    { which shows that, due to the choice of $\delta=\frac{1}{2}$, $\Lambda\leq \frac{1}{2}$, and $M$, the right-hand side of \eqref{e:relgrad} can be controlled by 
    $$|f_{i}(y_{i}^{-1}) - f_{j}(y_{j}^{-1})|^{\alpha} + r^{\alpha},$$
    with constant depending on $M, m,n,\alpha$. Since each $f_{i}$ is Lipschitz and has graph intersecting $\Bbf_{r}(x_0)$, the difference in values between $f_{i}$ and $f_{j}$ can also be controlled by a constant multiple of $r$. Therefore \eqref{e:relgrad} guarantees

    }
    \begin{equation}\label{e:diff-g}
        |\nabla g_i(z) - \nabla g_j(z)| \lesssim_{M,m,n} r^\alpha,
    \end{equation}
    thus verifying the hypothesis (ii) of Lemma \ref{l:almostmin}. Meanwhile, hypothesis (iii) of Lemma \ref{l:almostmin} follows from \eqref{e:lemma2.1(i)}, \eqref{e:diff-g} with $j=1$ and the triangle inequality.
 }

}
    
 Thus we may apply Lemma \ref{l:almostmin} with $\Omega' = B_r(\mathbf{p}_{\pi_{1}}(x_0))$ to deduce that
	\begin{equation} \label{e:dir}
		\sum_{i=1}^{q}\int_{B_{r}(\pbf_{\pi_{1}}(x_0))} |\nabla g_i|^2 \leq \bar C q r^{2\alpha}|B_{r}(\pbf_{\pi_{1}}(x_0))|.
	\end{equation}

Since $\partial S$ has empty boundary, invoking \eqref{e:0thorder} in turn yields
\begin{align*}
		\|\Gbf_{F_1}\|(\Cbf_{r}(x_0,\pi_1))  
		& = \sum_{i=1}^{q} \|\Gbf_{g_i}\|(\Cbf_r(x_0,\pi_1)) \\
		&\leq q|B_r(\mathbf{p}_{\pi_1}(x_0))| + \frac{1}{2}\sum_{i=1}^{q}\int_{B_{r}(\pbf_{\pi_{1}}(x_0))} |\nabla g_i|^2 \\
        &\qquad+ \int_{B_{r}(\pbf_{\pi_{1}}(x_1))} O\big(|\nabla g_i|^4\big) \\
		&\leq q(1+Cr^{2\alpha})|B_r(\mathbf{p}_{\pi_1}(x_0))| \\
        & \le \|\Gbf_{F_{1}} + \partial S\|(\Cbf_{r}(x_0,\pi_1)) +C_0  r^{2\alpha}|B_r(\mathbf{p}_{\pi_1}(x_0))|.
	\end{align*}

Subtracting $\| \Gbf_{F_{1}}\|\left(\Cbf_{r}(x_{0}, \pi_{1}) \setminus \Bbf_{r}(x_{0})\right)$ from each side and recalling that $\Gbf_{F} \restr \Bbf_{r}(x_{0}) = \Gbf_{F_{1}} \restr \Bbf_{r}(x_{0})$ 
yields
\begin{equation}\label{e:am-concl}
\|\Gbf_{F}\|(\Bbf_{r}(x_{0})) \le  \| \Gbf_{F} + \partial S\|(\Bbf_{r}(x_{0})) + C_{0} r^{m+2 \alpha},
\end{equation}
  {for some $C_0=C_0(m,n,M,\alpha)>0$}.  
\end{proof}

\begin{proof}[Proof of Proposition \ref{p:dprop}]
	Fix two indices $i\neq j$ and let $g\coloneqq f_i - f_j$. We will show that for any $x\in\R^m$ it holds that
	\begin{equation}\label{e:C1alpha-scalar}
		|\nabla g(x)| \le 2\max\left\{1, [\nabla g]_{C^{\alpha_*}(B_\eps(x))}\right\} g(x)^{\alpha} \equiv 2\max\left\{1, [\nabla g]_{C^{\alpha_*}(B_\eps(x))}\right\} g(x)^{\frac{\alpha_*}{1+\alpha_*}}.
	\end{equation}

If $g$ is constant, there is nothing to prove. Otherwise, suppose to the contrary that, after translating, $g(0) =R^{1+\alpha_*}> 0$, but $|\nabla g(0)| > 2 C_{g} R^{\alpha_*}$ where $C_{g} \defeq \max\{1,[\nabla g]_{C^{\alpha}(B_\eps)}\}$. Further, by rotation, we may assume that $|\nabla g(0)| = \partial_1 g(0) > 2 C_g R^{\alpha_*}$. Then for every $\xi \in [-R e_1,0]\cap\supp (g)$, the $C^{1,\alpha_*}$-regularity of $g$ yields $|\partial_1 g (0) - \partial_1 g(\xi)| \le  C_{g} R^{\alpha_*}$, which in turn implies
	\begin{equation} \label{e:fprimeeps}
		  \partial_1 g(\xi) > C_{g} R^{\alpha_*} \qquad \forall \xi \in [-Re_1,0].
	\end{equation}
	If $[-R e_1,0]\subset \supp(g)$, the Fundamental Theorem of Calculus yields
	$$
	R^{1+\alpha_*} - g(- R e_1) = \int_{-R}^{0} \partial_{1} g( t e_{1})  dt \overset{\eqref{e:fprimeeps}}{>} C_{g} R^{1+\alpha_*}.
	$$
	Hence $g(-R e_1) < (1- C_{g}) R^{1+\alpha_*} \le 0$. By the mean value theorem there exists $t \in [- R, 0]$ so that $g(t e_1) = 0$. By hypothesis, it follows $\partial_1 g(t e_1) = 0$ contradicting \eqref{e:fprimeeps} in this case. 

    On the other hand, if $[-R e_1,0]$ is not contained in $\supp(g)$, then we can immediately find $t\in [-R,0]$ with $g(te_1)=\partial_1 g(t e_1) = 0$, again contradicting \eqref{e:fprimeeps}.

    Thus, we conclude that \eqref{e:C1alpha-scalar} indeed holds for every $x\in\R^m$. Since by assumption, $f_{i} \in C^{1,\alpha}\cap\Lip_{1/4}(\R^{m};\R)$, the hypotheses of Proposition \ref{c:almostmin} are satisfied. Applying Proposition \ref{c:almostmin} completes the proof.
\end{proof}

\section{Proof of main Theorems}\label{s:pf}

We will frequently be using the notion of a Whitney decomposition $\Wscr$ of $\R^m\setminus E$, for a given closed set $E\subset \R^m$ {(or more generally, for a closed subset of an $m$-dimensional plane $\pi_0\subset \R^{m+n}$)}. Such a decomposition $\Wscr$ consists of a family of closed dyadic cubes $L \subset \R^m \setminus E$ with
\begin{itemize}
	\item $\dist(L, E) \simeq \ell(L)$;
	\item each cube $L$ intersects at most $\Lambda=\Lambda(m)\in \N$ cubes in $\Wscr$;
	\item there exists $\lambda=\lambda(m)>0$ such that if $L_1,L_2\in \Wscr$ satisfy $L_1\cap L_2\neq \emptyset$ then $\lambda^{-1} \ell(L_2) \leq \ell(L_1)\leq \lambda \ell(L_2)$.
\end{itemize}

Before starting the proof, given $k \in \N$ and $\alpha_{*} \in (0,1]$, we construct {a regularized $(k+\alpha_{*})$-power of the distance} to a closed set $E \subset \R^{m}$, denoted by $\eta = \eta_{k,\alpha_{*},E}$. This function, defined below (see \eqref{e:smoothd}), is in direct analogue with the regularized distance function $\Delta$ in \cite[VI.2, Theorem 2]{SteinSI}. Crucially, in Lemma \ref{t:uniformity} we show that for any closed set $E$,  the function $\eta$ enjoys the following properties
    \begin{equation} \label{e:i}
        \eta(x) \simeq_{m,k,\alpha_*} \dist(x, E)^{k+\alpha_*}, \qquad x\in \R^m,
    \end{equation}
    and for any multi-index $\beta$,
    \begin{equation} \label{e:ii} |\partial^\beta \eta(x)| \lesssim_{|\beta|,m,k,\alpha_*} \dist(x,E)^{k+\alpha_*-|\beta|}, \qquad x\in \R^m
    \end{equation}

    Let $\sW$ be a Whitney decomposition of $\R^{m} \setminus E$ and let $\varphi \in C_{c}^{\infty}(\R^{m} ; [0, \infty))$ be a smooth cut-off function satisfying $\varphi \equiv 1$ on $[-0.5,0.5]^m$, $\supp(\varphi) \subset [-0.6,0.6]^m$. Consider
\begin{equation} \label{e:smoothd}
\eta_{k,\alpha_*,E}(x) \defeq \sum_{L \in \sW} \ell(L)^{k+\alpha_*} ~ \varphi \left( \frac{x-x_{L}}{\ell(L)} \right),
\end{equation}
where $x_{L}$ denotes the center of $L$ and $\ell(L)$ is the side-length of $L$. Note that the properties of a Whitney decomposition guarantee that the sum in \eqref{e:smoothd} is locally finite, with a uniformly bounded number of summands.

\begin{lemma} \label{t:uniformity}
Let $k \in \N$, $M>0$, $\alpha_{*} \in (0,1]$, and $\alpha = \frac{k+\alpha_{*}-1}{k+\alpha_{*}}$. Let $E \subset \R^{m}$ be a closed set. If $\eta\equiv \eta_{k,\alpha_*,E}$ is as in \eqref{e:smoothd}, then $\eta\in C^\infty(\R^m\setminus E)$ satisfies \eqref{e:i} and \eqref{e:ii}.
If $E$ additionally satisfies $\dist(x,E) \le M$ for all $x \in \R^{m}$, then
$$
\|\eta \|_{C^{k,\alpha_{*}}(\R^{m})} \lesssim_{m,k,\alpha_{*},M} 1.
$$
\end{lemma}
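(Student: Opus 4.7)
The plan is to read all four assertions directly off the defining series \eqref{e:smoothd}, using only the Whitney-decomposition properties listed at the start of Section 3 together with the support/normalization properties of $\varphi$. First I would fix $x\in\R^m\setminus E$ and observe that a cube $L\in\Wscr$ contributes to $\eta(x)$ only if $\varphi((x-x_L)/\ell(L))\neq 0$, which forces $|x-x_L|_\infty\leq 0.6\,\ell(L)$; combined with $\dist(L,E)\simeq \ell(L)$ and the bounded-overlap property, this restricts the contributing $L$'s to a uniformly bounded collection, each with $\ell(L)\simeq \dist(x,E)$. In particular the sum is locally finite on $\R^m\setminus E$ and thus term-by-term smooth there, establishing $\eta\in C^\infty(\R^m\setminus E)$.

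Next, for \eqref{e:i} the upper bound is immediate from bounding each of the $\lesssim 1$ contributing summands by $\ell(L)^{k+\alpha_*}\lesssim \dist(x,E)^{k+\alpha_*}$, while the lower bound follows by noting that the unique Whitney cube $L_0\ni x$ contributes the full amount $\ell(L_0)^{k+\alpha_*}$ because $\varphi\equiv 1$ on $[-0.5,0.5]^m$ and all other summands are nonnegative. For \eqref{e:ii}, differentiating termwise gives a factor of $\ell(L)^{-|\beta|}$ on each contributing summand, after which the same bounded-overlap and Whitney-comparability arguments yield $|\partial^\beta\eta(x)|\lesssim \dist(x,E)^{k+\alpha_*-|\beta|}$.

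For the $C^{k,\alpha_*}$ bound, I would first use $\dist(x,E)\le M$ in \eqref{e:ii} to bound $\|\partial^\beta\eta\|_{L^\infty}$ for all $|\beta|\le k$ by a constant depending on $m,k,\alpha_*,M$. For points $x\in E$, I would argue from \eqref{e:i} that $\eta(y)=O(|y-x|^{k+\alpha_*})$, so all partials of order $\le k$ exist and vanish at $x$. To verify the $\alpha_*$-Hölder seminorm of $\partial^\beta\eta$ for $|\beta|=k$, I would split $x,y\in \R^m$ into two regimes: (a) if $|x-y|\geq \tfrac12\max(\dist(x,E),\dist(y,E))$, then each term in $|\partial^\beta\eta(x)-\partial^\beta\eta(y)|$ is separately bounded using \eqref{e:ii} and $\dist(\cdot,E)\lesssim |x-y|$, giving $\lesssim |x-y|^{\alpha_*}$; (b) if $|x-y|<\tfrac12\max(\dist(x,E),\dist(y,E))$, then the segment between $x$ and $y$ stays in $\R^m\setminus E$ with $\dist(z,E)\simeq \dist(x,E)$ along it, so by the mean value theorem and the $|\beta|=k+1$ case of \eqref{e:ii},
\[
|\partial^\beta\eta(x)-\partial^\beta\eta(y)|\lesssim \dist(x,E)^{\alpha_*-1}|x-y|\lesssim |x-y|^{\alpha_*},
\]
where in the last step one uses $\dist(x,E)\gtrsim |x-y|$ and $\alpha_*-1\le 0$. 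The cases where one or both of $x,y$ lie in $E$ reduce to (a) after replacing a point in $E$ by itself as its own nearest neighbor.

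The main obstacle is the $\alpha_*$-Hölder estimate on $\partial^k\eta$ across the boundary of $E$: away from $E$ the derivatives blow up like $\dist(\cdot,E)^{\alpha_*-1}$, which is only integrated to give Hölder control via the careful close/far dichotomy above and the sharp Whitney comparability $\dist(z,E)\simeq\dist(x,E)$ on the connecting segment. Everything else is a direct consequence of the bounded-overlap property of $\Wscr$ and the scaling of the bump functions $\varphi((x-x_L)/\ell(L))$.
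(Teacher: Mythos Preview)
Your plan is correct and follows essentially the same route as the paper: bounded overlap of the Whitney cubes plus the scaling of the bumps gives \eqref{e:i}--\eqref{e:ii}, and the H\"older estimate is obtained by a near/far dichotomy in $|x-y|$ versus $\dist(\cdot,E)$. Your two-case split (with the mean value theorem and the $|\beta|=k+1$ instance of \eqref{e:ii} in the ``near'' case) is marginally cleaner than the paper's three-case split, which instead uses the Lipschitz bound on $\partial^\beta\varphi$ directly on the sums over $\Wscr_x\cup\Wscr_y$.

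One small imprecision: the decay $\eta(y)=O(|y-x|^{k+\alpha_*})$ from \eqref{e:i} alone does \emph{not} imply that the partials of order $\le k$ exist at $x\in E$ (think of $t^2\sin(1/t)$). You should instead invoke \eqref{e:ii}, which shows that each $\partial^\beta\eta$ with $|\beta|\le k$ extends continuously by $0$ across $E$; a standard inductive application of the fundamental theorem of calculus then gives $\eta\in C^k(\R^m)$ with all such partials vanishing on $E$. This is exactly what the paper does via \eqref{e:ine}.
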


\begin{proof}
    Let us first demonstrate the validity of \eqref{e:i} and \eqref{e:ii}. Note that \eqref{e:i} follows trivially in the case where $x\in E$. Now fix $x\in \R^m\setminus E$. Observe that if $x\in 1.2 L$ for $L\in \Wscr$, then
    \[
        \ell(L) \simeq \dist(L,E) \lesssim \dist(x,E) \lesssim \dist(L,E) + \ell(L) \lesssim \ell(L),
    \]
    and if $x \not \in 1.2 L$, $\varphi \left( \frac{x-x_{L}}{\ell(L)} \right) = 0$.
    Combining this with the boundedness of $\vphi$ and the local finiteness of the sum in \eqref{e:smoothd} yields the conclusion of \eqref{e:i}. The conclusion of \eqref{e:ii} follows by entirely analogous reasoning, combined with the boundedness of $\partial^{\beta} \varphi$ and the fact that
    \begin{equation} \label{e:gradd}
        \partial^\beta \eta(x) = \sum_{L \in \Wscr} \ell(L)^{k+\alpha_*-|\beta|}\partial^\beta\vphi\left(\frac{x-x_L}{\ell(L)}\right),
    \end{equation}
    for every multi-index $\beta$.
    
        In particular, whenever $|\beta| \le k$, \eqref{e:ii} implies that
        \begin{equation} \label{e:ine} \lim_{y \to x} |\partial^{\beta} \eta(y)| = 0 \qquad \forall x \in E,
        \end{equation}
        verifying that $\| \eta \|_{C^{k}(\R^{m})} \lesssim_{m,k,\alpha_{*},M} 1$. So, it only remains to check that for any $|\beta| = k$, $[\partial^{\beta} \eta]_{C^{\alpha_{*}}(\R^{m})} \lesssim_{m,k,\alpha_{*},M} 1$.

    Note that \eqref{e:ine} confirms there is nothing to show when $x,y \in E$. So, for the remainder of the proof it suffices to consider, without loss of generality, that $x \in \R^{m} \setminus E$.  We turn our attention to the case when $\max \{\dist(x,E), \dist(y,E) \} \le |x-y|$. Since $\partial^{\beta} \varphi$ is bounded, \eqref{e:gradd} yields

\begin{equation}\label{e:crude}
|\partial^{\beta} \eta(x) - \partial^{\beta} \eta(y)| \lesssim \sum_{L \in\Wscr} \max\{\dist(x,E), \dist(y,E)\}^{\alpha_*} \lesssim |x-y|^{\alpha_*}.
\end{equation}
When $\min\{\dist(x,E),\dist(y,E)\} \ge |x-y|$, we first note that the definition of a Whitney decomposition guarantees that all the sums are taken over only those cubes $L \in \Wscr_x \cup \Wscr_y$ where $\Wscr_x = \{ L \in \sW : x \in 1.2 L \}$ and $\Wscr_y$ is defined analogously. In particular, all sums are finite and the number of summands is bounded uniformly by a dimensional constant independent of $x,y$. Exploiting this and the Lipschitz regularity of $\partial^\beta\vphi$, \eqref{e:gradd} implies that
\begin{align}
\nonumber    | \partial^\beta \eta(x) - \partial^\beta \eta(y)|  &\lesssim \sum_{L \in \Wscr_x} \ell(L)^{\alpha_*} \left|\frac{x-y}{\ell(L)}\right| + \sum_{L \in \Wscr_y} \ell(L)^{\alpha_*} \left|\frac{x-y}{\ell(L)}\right| \\
\nonumber    &\lesssim \sum_{L \in \Wscr_x} \dist(x,E)^{\alpha_*} \left|\frac{x-y}{\dist(x,E)}\right| + \sum_{L \in \Wscr_y} \dist(y,E)^{\alpha_*} \left|\frac{x-y}{\dist(y,E)}\right| \\
\nonumber    &= |x-y|^{\alpha_*}\left[\sum_{L\in \Wscr_x} \left|\frac{x-y}{\dist(x,E)}\right|^{1-\alpha_*} + \sum_{L \in \Wscr_y}\left|\frac{x-y}{\dist(y,E)}\right|^{1-\alpha_*}\right] \\
\label{e:care}    &\lesssim |x-y|^{\alpha_*}.
\end{align}

It remains to deal with the case where, without loss of generality, $$\dist(y,E) \leq |x-y| \leq \dist(x,E).$$ 
To this end, we argue as in \eqref{e:care} for cubes $L \in \Wscr_{x}$ and as in \eqref{e:crude} for cubes $L \in \Wscr_{y}$:
\begin{align*}
    | \partial^\beta \eta(x) - \partial^\beta \eta(y)|  &\lesssim \sum_{L\in \Wscr_x} \ell(L)^{\alpha_*} \left|\frac{x-y}{\ell(L)}\right| + \sum_{L\in \Wscr_y} \ell(L)^{\alpha_*}  \\
    &\lesssim |x-y|^{\alpha_*}.
\end{align*}
\end{proof}

\begin{remark}[Validity of Bombieri's almost-minimality definition]\label{r:Bom}
	Observe that in place of the estimate \eqref{e:am-concl} in Proposition \ref{c:almostmin}, one can use the preceding calculations and \eqref{e:0thorder} to instead establish the estimate 
	\begin{equation}
		\|\Gbf_{F}\|(\Bbf_{r}(x_{0})) \le  \| \Gbf_{F} + \partial S\|(\Bbf_{r}(x_{0})) + C r^{2 \alpha}\| \Gbf_{F} + \partial S\|(\Cbf_{r}(x_{0},\pi_1)).
	\end{equation}
	Combining with the property that $\dist(x,E)\leq 1$ for the set $E$ chosen in the proof of Theorem \ref{t:graphs} below, if the set $K$ is taken to be compact (rather than merely closed), the choice of $\Gbf_F$ therein satisfies the property \eqref{e:am-Bombieri} with the compact set $F$ taken to be $\overline{B_{M}(0,\pi_0)}\times [0,C_{m,k,\alpha_{*}}]\subset \R^{m+1}$, where $C_{m,k,\alpha_{*}}$ is a positive constant depending on $m,k,\alpha_*$ coming from \eqref{e:i},  with a choice of $M$ sufficiently large so that $K \Subset \overline{B_{M}(0,\pi_0)}$, for example.
	
	Likewise, we can establish the almost-minimality property \eqref{e:am-Bombieri} for the current $T$ in Theorem \ref{t:main-branch}.
\end{remark}

\begin{proof}[Proof of Theorem \ref{t:graphs}]

Let $K \subset \R^{m} \times \{0\}$ be as in Theorem \ref{t:graphs}, let $E = K \cup \{ x \in \R^{m} : \dist(x,K) \ge 1 \}$. Then $E$ satisfies the hypotheses of Lemma \ref{t:uniformity} with $M=1$. Let $\eta\equiv \eta_{k,\alpha_*,E}$ be given by Lemma \ref{t:uniformity} for this choice of $E$. Then, for $1 \le i \le Q$, 
consider the functions
\begin{equation} \label{e:fi2}
f_{i}(x) = \frac{i \eta(x)}{4 Q \Lip(\eta)}.
\end{equation}
Then, $\max_{i}\Lip(f_{i}) \le \frac{1}{4}$. In addition, since $\alpha < \alpha_{*}$, Lemma \ref{t:uniformity} tells us that $\max_i [\nabla f_i]_{C^{0,\alpha}}$ $\lesssim_{m,k,\alpha_*} 1$. So, Proposition \ref{c:almostmin} can be applied with some $M$ depending on $m,k,\alpha_*$.
Moreover, a direct computation shows that for all $x \in \R^m$,
\begin{align*}
    |\nabla f_{i}(x) - \nabla f_{j}(x)| &\overset{\eqref{e:ii}}{\lesssim}_{k,m,\alpha_{*},Q} |i-j|^{\frac{k+\alpha_{*}-1}{k+\alpha_{*}}}\dist(x,E)^{k+\alpha_{*}-1} \\
    &\overset{\eqref{e:i}}{\simeq_{k,m,\alpha_*},Q}  |f_{i}(x) - f_{j}(x)|^{\frac{k+\alpha_{*}-1}{k+\alpha_{*}}}.
\end{align*}
This demonstrates the hypotheses of Proposition \ref{c:almostmin} are satisfied with some constant $C_{4}$ depending only on $m,k,\alpha_{*}$, and $Q$. From Proposition \ref{c:almostmin} it follows that there exists some $R_{0}, C_{0}$ depending only on $m, k,\alpha_{*},$ and $Q$ so that \eqref{e:alm-min-ball} holds for any ball $\Bbf_{r}(x_0) \subset \R^{m+1}$ whenever $r \le R_{0}$. 
Consequently, the current $T = \Gbf_{F}$ with $F = \sum_{i=1}^{Q} \llbracket f_{i} \rrbracket$ is $(C_{0}, 2 \alpha, R_{0})$-almost minimizing with constants depending only on $m,k,\alpha_{*}$, and $Q$.

To see that $K\subset\Sing(T)$, simply notice that $K\subset\partial E$, and for any $x \in \partial E$, $\spt(T)$ is the union of $Q$ distinct $C^{1,\alpha_*}$-manifolds intersecting at $x$, and thus is $\spt(T)$ is not an embedded $C^{1,\alpha_*}$-manifold in any neighborhood of $x$. That is, $\partial E = \Sing (T)$. Finally, by \eqref{e:fi2} it follows that $\Sing(T) = \Fsing$; this completes the proof.
 \end{proof}

\begin{proof}[Proof of Theorem \ref{t:main-branch}]
	
Let $\alpha=\frac{Qk+1-Q}{Qk+1}$, $\pi_0\coloneqq \R^2\times \{0\}^{2} \subset\R^4$,  $z=(x,y)$ denote coordinates in $\pi_0\cong \R^2$, and $K\subset \pi_0$ be an arbitrary closed set with empty interior. We proceed to construct a multi-valued function $F: B_1(0,\pi_0) \to \Acal_2(\pi_0^\perp)$ whose reparametrization to an appropriate plane will satisfy the assumptions of Lemma \ref{l:almostmin}.  

Let $\Wscr$ be a Whitney decomposition of $\pi_0\setminus K$  in $\pi_0$, $\{L_l\}_{l\in \N}$ be an enumeration of the cubes in $\Wscr$ with $\ell(L_l) \le 1$, $z_l$ denote the center of $L_l$, and $r_l \coloneqq \frac{\ell(L_l)}{4} \le \frac{1}{4}$.
Define $v: \pi_0 \to \Acal_Q(\pi_0^\perp)$ as
\[
v(z) = \sum_{\xi^Q = z^{Qk+1}}\llbracket \xi \rrbracket \eqqcolon \sum_{j=1}^Q \llbracket v_j(z)\rrbracket,
\]
the $Q$-valued function whose graph is the holomorphic variety $\{w^Q=z^{Qk+1}\}\subset \C^2 \equiv \R^4$. Here, $v_j:\pi_0\to\pi_0^\perp$ are measurable functions representing the $Q$ roots of $z\mapsto z^{Qk+1}$.

 Let $\eta\equiv \eta_{k,\frac{1}{Q},E}:\pi_0\to \R$ be the function given by Lemma \ref{t:uniformity} for the closed set $E\coloneqq \pi_0\setminus B_{1/2}(0)$. Note that Lemma \ref{t:uniformity} implies that $\eta\in C_c^{k,\frac{1}{Q}}(\pi_0;\R)$ with $\|\eta\|_{C^{k,\frac{1}{Q}}}\lesssim_{k,Q} 1$. We note for later use, that $\supp(\eta)=  \overline{B_{1/2}(0)}$ and from \eqref{e:i} and \eqref{e:ii} it follows that
$|\nabla \eta(z)|  \lesssim_{Q,k} \eta(z)^{\alpha}.$ 
In particular,
\begin{equation} \label{e:etabounded}
	\left| \nabla \eta(z) \right| \eta(z)^{-\alpha} \lesssim_{Q,k} 1.
\end{equation}

Consider $w\coloneqq \eta v : B_1(0) \to \Acal_Q(\pi_0^\perp)$. Define the measurable functions $w_l : B_{r_{l}}(z_{l}) \to \mathcal{A}_{Q}(\pi_{0}^{\perp})$ by
\[
w_l(z) \coloneqq \kappa r_l^{\frac{Qk+1}{Q}}  w\left(\frac{z - z_l}{r_l}\right) = \sum_{j=1}^Q \left\llbracket \kappa r_l^{\frac{Qk+1}{Q}} \eta\left(\frac{z - z_l}{r_l}\right) v_j\left(\frac{z - z_l}{r_l}\right)\right\rrbracket,
\]
where $\kappa$ depends only on $Q,k$ and is chosen so that given any choice of branch cut for the logarithm on $B_{r_{l}}(z_l)$, each branch of $w_{l}$ satisfies $|\nabla w_{l}| \le \frac{1}{4}$, see \eqref{e:theconstant}. Now let $\Ucal = \bigcup_{l \in \N} B_{r_l}(z_l)$ and define the measurable $Q$-valued function $F:\pi_0 \to \Acal_Q(\pi_0^\perp)$ via a selection of measurable functions $g_j:\pi_0\to \pi_0^\perp$ as follows:
	\begin{equation} \label{e:keytoentiretyofcasec}
		F(z) = \sum_{j=1}^Q \llbracket g_j(z) \rrbracket \coloneqq \begin{cases*}
		    Q \llbracket 0 \rrbracket & $z\in \pi_0 \setminus \Ucal$ \\ 
            w_l(z) = \sum_{j=1}^Q \left\llbracket \kappa r_l^{\frac{Qk+1}{Q}} \eta\left(\frac{z - z_l}{r_l}\right) v_j\left(\frac{z - z_l}{r_l}\right)\right\rrbracket & $z\in B_{r_l}(z_l)$.
		\end{cases*}
	\end{equation}

We claim that $\Gbf_F$ is a $(C_0,2\alpha,R_0)$-almost minimizer for an appropriate choice of positive constants $C_0,R_0$ and $\kappa$. Let $\Bbf_r(x_0)\subset \R^4$ be such that $\Bbf_r(x_0)\cap \spt\Gbf_F\neq \emptyset$. To this end, we subdivide into three cases, based on $I\coloneqq \{ l : z_l\in B_{2r}(\pbf_{\pi_{0}}(x_0))\}$ and $I^{*} = \{ l : B_{r_{l}}(z_{l}) \cap B_{r}(\mathbf{p}_{\pi_0}(x_0)) \neq\emptyset\}$:
	\begin{enumerate}
		\item[(a)] $I=I^*=\emptyset$
		\item[(b)] $I=\emptyset$ and $I^* \neq \emptyset$
		\item[(c)] $I\neq\emptyset$
	\end{enumerate}
	In case (a), $\Gbf_{F} \restr \Bbf_{r}(x_{0})$ is the current $Q\llbracket \Bbf_r(x_0)\cap (\pi_0\times \{0\})\rrbracket$, which is clearly area minimizing in $\Bbf_{r}(x_{0})$. 
	
We emphasize a key distinction between the remaining cases:  in (b) we will need to reparameterize the sheets of $F$ over a possibly tilted plane {(relative to $\pi_0$)}, and in case (c) we will not need to reparameterize. This is because in case (c), the plane $\pi_0$ is sufficiently close to optimal, while in case (b) it may not be. The need to reparametrize in case (b) crucially requires us to know that the sheets of $F$ are disjoint single-valued graphs in the entirety of $\Bbf_r(x_0)$, which is due to the lack of branch points in $B_{2r}(\pbf_{\pi_0}(x_0))$, unlike in case (c).

Let us begin with case (c). Here, we will proceed to define an open subset $\Omega'\subset B_{2r}(\mathbf{p}_{\pi_0}(x_0))$ such that $|B_{2r}(\mathbf{p}_{\pi_0}(x_0))\setminus \Omega'| = 0$, and for which the hypotheses of Lemma \ref{l:almostmin} hold, directly for the functions $g_i$. Indeed, let 
\[
    \Omega' \coloneqq B_{2r}(\mathbf{p}_{\pi_0}(x_0)) \setminus \bigcup_l [z_l,z_l + r_l].
\]
In light of the introduction of a branch cut in each $\Omega'\cap B_{r_l}(z_l)$, combined with the pairwise disjointness of the supports of the functions $w_l$, a choice of the complex logarithm can be made to ensure that
\begin{equation}\label{e:rep}
    g_j(\zeta) = \kappa r_{l}^{\frac{Qk+1}{Q}} \eta \left( \frac{\zeta-z_{l}}{r_l} \right) \left( \frac{\zeta-z_{l}}{r_l} \right)^{\frac{(Qk+1)}{Q}}e^{\frac{j2\pi i}{Q}} \qquad \forall \zeta \in \Omega' \cap  B_{r_l}(z_l)
\end{equation}
and is therefore a $C^1$ function on $\Omega'$.

Now fix any index $l\in I$ and any $\zeta\in \Omega'\cap B_{r_l}(z_l)$. We have
\begin{align*}
    |\nabla g_j(\zeta)| &\lesssim_{Q,k} |\zeta - z_l|^{\frac{Qk+1-Q}{Q}} \eta \left(\frac{\zeta - z_l}{r_l}\right) + r_l^{-1}|\zeta-z_l|^{\frac{Qk+1}{Q}}\left|\nabla\eta\left(\frac{\zeta - z_l}{r_l}\right)\right| \\
    &= |g_j(\zeta)|^{\frac{Qk+1-Q}{Qk+1}}\left[\eta\left(\frac{\zeta - z_l}{r_l}\right)^{\frac{Q}{Qk+1}} + \left|\frac{\zeta-z_l}{r_l}\right|\left|\nabla\eta\left(\frac{\zeta - z_l}{r_l}\right)\right|\eta\left(\frac{\zeta-z_l}{r_l}\right)^{-\frac{Qk+1-Q}{Qk+1}}\right]
\end{align*}
Now, \eqref{e:etabounded} and $\supp(\eta)\subset \overline{B_{1/2}(0)}$ imply that
\[
    |\zeta||\nabla \eta(\zeta)|\eta(\zeta)^{-\frac{Qk+1-Q}{Qk+1}} \lesssim_{Q,k} 1 \qquad \forall \zeta \in \pi_0.
\]
This, combined with the fact that  $r_{l} \le \frac{1}{4}$ and $\|\eta\|_{C^0} \lesssim 1$ therefore yields the existence of a choice of $\kappa$ depending only on $Q$ and $k$ so that for every $\zeta \in \Omega' \cap B_{r_l}(z_l)$:
\begin{align}\label{e:theconstant}
    \nonumber |\nabla g_j(\zeta)| & \lesssim_{Q,k} |g_j(\zeta)|^{\frac{Qk+1-Q}{Qk+1}} \lesssim_{Q,k} \kappa r_l^{\frac{Qk+1-Q}{Q} - \frac{Qk+1-Q}{Qk+1}}|\zeta-z_l|^{\frac{Qk+1-Q}{Qk+1}} \\
    & \le \kappa |\zeta-z_l|^{\frac{Qk+1-Q}{Qk+1}} \le \frac{1}{4}.
\end{align}
We claim that the penultimate inequality in \eqref{e:theconstant} yields
\begin{equation} \label{e:pre3.1}
|\nabla g_{j}(\zeta)| \lesssim (\diam \Omega')^{\frac{Qk+1-Q}{Qk+1}} \qquad \forall ~\zeta \in \Omega'.
\end{equation}
Assuming \eqref{e:pre3.1} holds, one can readily check the hypotheses (i), (ii), and (iii) of Lemma \ref{l:almostmin} by the triangle inequality.

Let us now proceed to verify \eqref{e:pre3.1}. Note it is trivial whenever $\nabla g_{j}(\zeta) = 0$. So we assume $\zeta \in \Omega' \cap B_{r_l}(z_l)$ for some $l \in I \cup I^*$. If $l \in I$ then $z_l, \zeta \in B_{2r}(\mathbf{p}_{\pi_0}(x_0))$ so $|\zeta - z_l|  \le 4r = \diam \Omega'$, so \eqref{e:theconstant} implies \eqref{e:pre3.1}. On the other hand if $l \in I^{*} \setminus I$, because we are in case (c) there exists $l'\in I$ such that $z_{l'}\in B_{2r}(\mathbf{p}_{\pi_0}(x_0))$. Since for each $z_{i}$, $B_{2r_{i}}(z_{i}) \subset L_{i} \in \Wscr$ it follows that $|\zeta-z_l| \leq r_{l} \le |z_{l'} - \zeta| \le \diam \Omega'$. Thus, again \eqref{e:theconstant} implies \eqref{e:pre3.1} in this case as claimed.

We therefore apply Lemma \ref{l:almostmin} with this choice of $\Omega'$, to conclude that \eqref{e:Dir} holds for some constant $\bar C=\bar{C}(k,Q)>0$.\footnote{In particular, $\bar C$ is independent of the specific choice of $\Omega'$ (which here depends on the positioning of the disk $B_{2r}(\mathbf{p}_{\pi_0}(x_0))$).} Since $|B_{r}(\pbf_{\pi_0}(x_0) \setminus \Omega'| = 0$ and $\diam(\Omega')=2r$, \eqref{e:Dir} implies \eqref{e:dir}, so proceeding as in the proof of Proposition \ref{c:almostmin} from that point on, we conclude that
	$$
	\|\Gbf_F\|(\Bbf_r(x_0))\leq \|\Gbf_F + \partial S\|(\Bbf_r(x_0)) + C_0r^{2+2\alpha},
	$$
where $C_0= C_0(Q,k)>0$.  Note there is no restriction on the scale $r$ in this case.

We now turn our attention to case (b). In this case, $|\nabla g_{j}|$ could be large relative to $r^\alpha$ so the hypotheses of Lemma \ref{l:almostmin} cannot be satisfied {directly for $g_i$} and we instead must appeal to Proposition \ref{c:almostmin}. Up to relabelling the indices, choose $q\leq Q$ so that $g_1,\dots,g_q$ denote the functions as defined in \eqref{e:keytoentiretyofcasec} whose graphs intersect $\Bbf_r(x_0)$. In this case, we claim that the functions $g_{j}$ are single-valued $C^{k,\frac{1}{Q}}$ functions on the entirety of $ B_{2r}(\mathbf{p}_{\pi_0}(x_0))$ and further that \eqref{e:rep} holds with $\Omega' = B_{2r}(\pbf_{\pi_0}(x_0))$ and
     \begin{equation} \label{e:fullreg}
     \|g_j\|_{C^{1,\alpha}(\Omega')}\lesssim_{Q,k} 1.
     \end{equation}
     That $g_{j}$ is single-valued and that \eqref{e:rep} hold both follow from the assumption $I = \emptyset$ in case (b). Since \eqref{e:rep} holds, so does \eqref{e:theconstant}. Since $\Omega'$ is a ball, this guarantees that $\Lip(g_{j}) \le \frac{1}{4}$. Since $I = \emptyset$, observe that $B_{2r}(\pbf_{\pi_0}(x_0)) \cap B_{r_l}(z_l) \subset \Hbb_{l} \cap  B_{r_{l}}(z_{l})$, where $\Hbb_{l}$ is the halfspace through $z_l$ with normal in the direction of $\pbf_{\pi_{0}}(x_0) - z_{l}$. Thus, it follows from Lemma \ref{t:uniformity}, \eqref{e:rep}, and the fact that $z \mapsto z^{\frac{Qk+1}{Q}}$ is $C^{k,\frac{1}{Q}}(\Hbb)$ for any halfspace $\Hbb$ through the origin (with norms depending only on $k,Q$ and not the halfspace) that if $\zeta, \xi \in \Omega' \cap B_{r_{l}}(z_{l})$ then $|\nabla g_{j}(\zeta) - \nabla g_{j}(\xi)| \lesssim_{k,Q} |\zeta - \xi|^{\alpha}$. If $\zeta \in B_{r_{l}}(z_{l})$ and $\xi \in B_{r_{l'}}(z_{l'})$ for distinct $l, l'$ then $|\xi - \zeta| \ge r_{l} + r_{l'}$ since $B_{2r_{i}}(z_{i}) \subset L_{i}$ for all $L_{i} \in \Wscr$ with $\ell(L_{i}) \le 1$. In particular, recalling $\alpha = \frac{Qk+1-Q}{Qk+1}$ and applying \eqref{e:theconstant} yields
     $$
     | \nabla g_{j}(\xi) - \nabla g_{j}(\zeta)| \le |\nabla g_{j}(\xi)| + |\nabla g_{j}(\zeta)| \lesssim_{Q,k} r_{l}^{\alpha} + r_{l'}^{\alpha} \lesssim |\xi - \zeta|^{\alpha}.
     $$
     Together with the estimate when $\xi,\zeta$ are in the same ball and the fact that $g_{j} \equiv 0$ outside $\cup_{l} B_{r_l}(z_{l})$ it follows \eqref{e:fullreg} holds. As a consequence, $[\nabla g_{j}]_{C^{\alpha}(\Omega')} \lesssim 1$ so the only hypothesis of Lemma \ref{c:almostmin} left to check is \eqref{e:C1alpha.2}.

    Now, we check \eqref{e:C1alpha.2}. By \eqref{e:rep}, we have
    \begin{equation*}
        g_{j}(\zeta) - g_{j'}(\zeta) = g_{j}(\zeta) \left(1 - e^{\frac{2 \pi i (j'-j)}{Q}} \right)
    \end{equation*}
        for each $\zeta\in B_{2r}(\mathbf{p}_{\pi_0}(x_0))$. 
    Thus, following the computations leading to the first inequality in \eqref{e:theconstant} (except now for any point $z\in B_{2r}(\mathbf{p}_{\pi_0}(x_0))$), we deduce that for each $j\neq j'$,
	\[
	|\nabla g_j(\zeta)-\nabla g_{j'}(\zeta)| \lesssim_{Q,k} |g_j(\zeta) - g_{j'}(\zeta)|^{\frac{Qk+1-Q}{Qk+1}} \qquad \forall \zeta \in B_{2r}(\mathbf{p}_{\pi_0}(x_0)),
	\]
   It follows from Proposition \ref{c:almostmin} that in case (b) there exists $R_0=R_0(k,Q) >0$ such that whenever $r\leq R_0$,
	$$
	\|\Gbf_F\|(\Bbf_r(x_0))\leq \|\Gbf_F + \partial S\|(\Bbf_r(x_0)) + C_0r^{2+2\alpha},
	$$
	for some positive constant $C_{0}=C_0(k,Q)$. Since case (b) is the only case restricting $R_{0}$, we may take this choice of $R_0$ for the conclusion of the theorem. Choosing $C_{0}$ to be the largest constant from each of the above cases therefore completes all the necessary work to verify that $\Gbf_{F}$ is a $(C_{0}, 2\alpha, R_{0})$-almost minimizer in $\R^{4}$.

    It remains to check that for $T = \Gbf_{F}$ we have $K \subset \Sing(T) = \Ffrak_{Q}(T)$. The fact that $\Sing(T) \supset K$ follows since the singular set is always closed and $\overline{ \cup_{l} \{z_{l}\}} \supset K$, while each $z_l$ is a branching singularity of $\Gbf_{F}$. To see that $\Sing(T)=\Ffrak_Q(T)$, we argue as follows. First of all, clearly $\{z_l\}_{l\in\N}\subset \Ffrak_Q(T)$. It therefore remains to check that $K\subset \Ffrak_Q(T)$. Since 
    $$
    \| w_{l}\|_{L^\infty} \lesssim_{k,Q} r_{l}^{\frac{Qk+1}{Q}} \simeq  \dist( \supp (w_{l}), K)^{\frac{Qk+1}{Q}},
    $$ and the $w_{l}$ have disjoint support, it follows that for each $x\in K$,
    $$
    \dist( \Bbf_{r}(x) \cap \spt(T) , \Bbf_{r}(x) \cap \pi_{0}) \lesssim r^{\frac{Qk+1}{Q}}.
    $$
    This, together with the $Q$-valued graphicality of $T$, in turn yields that for any such $x$, the rescalings\footnote{Here $\iota_{x,r}(y)\coloneqq \frac{y-x}{r}$ and we let $(\iota_{x,r})_\sharp T$ denote the pushforward current under the map $\iota_{x,r}$.} $T_{x,r}\coloneqq (\iota_{x,r})_\sharp T$ satisfy
    \[
        T_{x,r}\mres \Bbf_1 \toweakstar Q\llbracket \pi_0 \cap \Bbf_1 \rrbracket \qquad 
        \text{along any subsequence as $r\todown 0$.}
    \]
\end{proof}

\appendix

\section{Reparameterizations}
Here, we introduce a key reparameterization result which is used frequently throughout this article. We first recall the following elementary fact. For an affine function $A$, note that $\frac{1}{\sqrt{1 + \Lip(A)^{2}}}$ and $\frac{\Lip(A)}{\sqrt{1 + \Lip(A)^{2}}}$ are respectively the cosine and sine of the (maximal one-dimensional) angle between the domain of $A$ and the graph of $A$.

\begin{proposition} \label{p:step1}
    Fix $r,\sigma,\Lambda>0$ and set $\tau = (1 + \Lambda^{2})^{-1/2}$.  Let $\pi_0$ be an $m$-dimensional plane in $\R^{m+n}$, $x_0=(x', \bar{x}) \in \pi_0\times \pi_0^\perp \equiv \R^{m+n}$,  $A : \pi_0 \to \pi_0^{\perp}$ a linear function with graph parallel to some $\varpi \in G(m,m+n)$, and $\delta > 0$ satisfy
    \begin{equation} \label{e:closeenough}
            \delta + \frac{ \Lip(A) \sigma}{\sqrt{1+\Lip(A)^{2}}}  \le  \tau.
    \end{equation} 
    Suppose that $f \in \Lip \left( B_{\delta^{-1} r}(x',\pi_0); \pi_0^\perp \right)$. To each $x \in B_{\delta^{-1} r}(x')$ define $y = y(x) \in \varpi$ by $y = \pbf_{\varpi}(x, f(x))$. In particular,  $y' = y(x')$. 
    \begin{enumerate}
        \item If 
        $\Lip(f - A) \le \Lambda < \infty,$
        then there exist $g : B_{\tau \delta^{-1} r}(y') \to \varpi^{\perp}$ so that $\gr(g)= \gr(f) \cap \Cbf_{\tau \delta^{-1} r}(y',\varpi)$.

        \item If $| f(x') - \bar{x}| \le \sigma \delta^{-1} r$ then it follows that $B_{r}(\pbf_{\varpi}(x_0)) \subset B_{\tau \delta^{-1} r}(y').$
        
        \item  If $f \in C^{1,\alpha}(B_{\delta^{-1} r}(x'),\pi_0^\perp)$, {there exists $R_{2} = R_{2}([\nabla f]_{C^{\alpha}(B_{\delta^{-1} r}(x'))},m,n,\Lambda)>0$ such that if} $\delta^{-1} r \le R_{2}$ then $g \in C^{1,\alpha}(B_{\tau \delta^{-1} r}(y'),\varpi^\perp)$ and there exists a constant $C = C(\Lip(f),m,n)$ so that
        \begin{align} \label{e:hseminorm}
        [\nabla g]_{C^{\alpha}(B_{\tau \delta^{-1} r}(y'))} &\le C [\nabla f]_{C^{\alpha}(B_{\delta^{-1} r}(x'))} \equiv [\nabla (f-A)]_{C^{\alpha}(B_{\delta^{-1} r}(x'))}.
        \end{align}
        \item If $f_{1}, f_{2} \in \Lip \left( B_{\delta^{-1} r}(x') ; \pi_0^\perp \right)$ satisfy the same hypotheses as $f$ in (1) and (2), if $\delta$ is as in (2) and $g_{1}, g_{2}$ are the corresponding functions from (1) for $f_{1}, f_{2}$ respectively then 
        $$
        B_{r}(\pbf_\varpi(x_0)) \subset B_{\tau\delta^{-1}r}(\pbf_\varpi(x',f_1(x')))\cap B_{\tau\delta^{-1}r}(\pbf_\varpi(x',f_2(x')))
        $$ 
        and for any $z\in B_{r}(\pbf_\varpi(x_0))$, we have
        \begin{align}
\nonumber        |\nabla g_{1}(z) - \nabla g_{2}(z)|&\le \frac{\Lip(A)^\alpha}{(1 + \Lip(A)^{2})^{\frac{\alpha}{2}}}  \min_{i=1,2}[\nabla f_{i}]_{C^{\alpha}(B_{\delta^{-1} r}(x'))} |f_1(y_1^{-1}(z)) - f_2(y_2^{-1}(z))|^\alpha \\
\label{e:relgrad}        &\qquad+ \|\nabla f_1 - \nabla f_2\|_{C^0(B_{\delta^{-1} r}(x'))}.
        \end{align}
        \end{enumerate}
\end{proposition}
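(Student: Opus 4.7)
The plan is to view $y : x\mapsto \pbf_\varpi(x,f(x))$ as a small perturbation of an explicit affine bijection determined by $A$. Introduce $\Phi : \pi_0 \to \varpi$ defined by $\Phi(x) \coloneqq \pbf_\varpi(x,A(x))$; since the graph of $A$ is parallel to $\varpi$, the map $\Phi$ is an affine bijection with $\Phi^{-1}$ being $1$-Lipschitz. A direct orthogonal decomposition of $\R^{m+n}$ into the direction $\varpi_0$ of $\varpi$ and $\varpi_0^\perp$ yields the elementary angle estimate
\[
|\pbf_{\varpi_0}(0,v)| \le \tau_A |v|, \qquad v \in \pi_0^\perp, \qquad \tau_A \coloneqq \Lip(A)/\sqrt{1+\Lip(A)^2},
\]
so that splitting $(x,f(x)) = (x,A(x)) + (0,(f-A)(x))$ gives $y = \Phi + E$ with $E(x) \coloneqq \pbf_{\varpi_0}(0,(f-A)(x))$ and $\Lip(E) \le \tau_A \Lip(f-A)$. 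This decomposition is the foundation for all four parts.

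For part (1), the map $\Phi^{-1} \circ y = \id_{\pi_0} + \Phi^{-1}\circ E$ is a Lipschitz perturbation of the identity on $B_{\delta^{-1}r}(x')$ whose error has small Lipschitz constant (due to the bound $\Lip(f-A) \le \Lambda$). A contraction-mapping / inverse function theorem argument, together with the hypothesis \eqref{e:closeenough}, produces a Lipschitz inverse $y^{-1}$ whose image covers the disk $B_{\tau\delta^{-1}r}(y')$ in $\varpi$; set $g(z) \coloneqq \pbf_{\varpi^\perp}(y^{-1}(z), f(y^{-1}(z)))$. Part (2) reduces to a one-line estimate:
\[
|\pbf_\varpi(x_0) - y'| = |\pbf_{\varpi_0}(0, \bar x - f(x'))| \le \tau_A|\bar x - f(x')| \le \tau_A \sigma \delta^{-1} r,
\]
which combined with the triangle inequality and \eqref{e:closeenough} yields $B_r(\pbf_\varpi(x_0)) \subset B_{\tau\delta^{-1}r}(y')$.

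For part (3), the $C^{1,\alpha}$ regularity of $f$ lifts to $y$; restricting to scales $\delta^{-1}r \le R_2$ with $R_2$ chosen in terms of $[\nabla f]_{C^\alpha}$, $\Lambda$, and the ambient dimensions ensures that $dy$ is uniformly invertible on $B_{\delta^{-1}r}(x')$, so $y^{-1} \in C^{1,\alpha}$ with controlled seminorms. Differentiating $g \circ y = \pbf_{\varpi^\perp}(\id, f)$ via the chain rule gives the explicit formula
\[
\nabla g(y(x)) = \pbf_{\varpi^\perp}(\id,\nabla f(x)) \circ (dy(x))^{-1},
\]
which vanishes identically when $f = A$; consequently this expression depends only on $\nabla(f-A)$, and \eqref{e:hseminorm} follows from $[\nabla f]_{C^\alpha} = [\nabla (f-A)]_{C^\alpha}$ after a change of variables through $y^{-1}$. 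For part (4), the identity $y_1(x_1) = y_2(x_2) = z$, composed with $\Phi^{-1}$, yields $|x_1 - x_2| \lesssim \tau_A |f_1(x_1) - f_2(x_2)|$. Using the chain-rule formula above for each $\nabla g_i(z)$ and inserting the intermediate quantity obtained by evaluating the formula for $\nabla g_1$ at $x_2$ rather than $x_1$, the difference $\nabla g_1(z) - \nabla g_2(z)$ splits into a piece controlled by $[\nabla f_1]_{C^\alpha}|x_1-x_2|^\alpha \lesssim \tau_A^\alpha [\nabla f_1]_{C^\alpha}|f_1(x_1)-f_2(x_2)|^\alpha$ and a piece controlled by $\|\nabla f_1 - \nabla f_2\|_{C^0}$; by symmetry the roles of $f_1,f_2$ can be swapped, producing the minimum in \eqref{e:relgrad}.

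The main obstacle is the quantitative invertibility in part (1): the single inequality \eqref{e:closeenough} intertwines $\delta$, $\sigma$, $\Lip(A)$, and $\Lambda$, and must be used carefully to ensure that the perturbation argument runs on the full disk $B_{\delta^{-1}r}(x')$ with image containing $B_{\tau\delta^{-1}r}(y')$. Once this geometric input is secured and the chain-rule representation of $\nabla g$ is in hand, the Hölder estimates in (3) and (4) reduce to bookkeeping with the change-of-variables under $y^{-1}$.
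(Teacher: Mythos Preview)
Your overall architecture matches the paper's proof closely: both set up the map $y(x)=\pbf_\varpi(x,f(x))$, invert it, write $g=\pbf_{\varpi^\perp}(\cdot,f(\cdot))\circ y^{-1}$, and handle (3) and (4) via the chain rule and a splitting through an intermediate point. Part (2) is also argued identically. So the strategy is sound.

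There is one genuine soft spot, in part (1). Your contraction argument writes $\Phi^{-1}\circ y=\id_{\pi_0}+\Phi^{-1}\circ E$ with $\Lip(\Phi^{-1}\circ E)\le \tau_A\Lambda$, and then appeals to \eqref{e:closeenough} to run the perturbation. But \eqref{e:closeenough} only gives $\tau_A\sigma<\tau$, which does \emph{not} imply $\tau_A\Lambda<1$ in general (take $\sigma$ small). In fact \eqref{e:closeenough} plays no role in part (1); it is used only in part (2). The paper avoids this issue by arguing directly in the $\varpi\oplus\varpi^\perp$ decomposition: for $z_1,z_2\in\gr(f)$ one has $|\pbf_{\varpi^\perp}(z_1-z_2)|\le\Lambda|\pbf_\varpi(z_1-z_2)|$ (since $f-A$ is $\Lambda$-Lipschitz and the graph of $A$ is parallel to $\varpi$), whence $|z_1-z_2|^2\le(1+\Lambda^2)|\pbf_\varpi(z_1-z_2)|^2$. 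This yields injectivity of $\pbf_\varpi|_{\gr(f)}$ and, by the same Pythagorean reasoning applied to the slab $\{\dist(\cdot,\pi)\le\Lambda\delta^{-1}r\}$, shows the image contains $B_{\tau\delta^{-1}r}(y')$ with $\tau=(1+\Lambda^2)^{-1/2}$, without any smallness of $\tau_A\Lambda$. Replacing your contraction step with this direct estimate closes the gap; everything else in your outline goes through as in the paper.
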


\begin{proof}
    We may without loss of generality assume that $\pi_0 = \R^m \equiv \R^m\times \{0\}^{n}$. To prove (1), note that $\Lip(f-A) \le \Lambda$ implies that for $z_{1}, z_{2} \in \gr(f)$,
    $$
    |z_{1} - z_{2}|^{2} = |\pbf_{\varpi}(z_1) - \pbf_{\varpi}(z_2)|^{2} + |\pbf_{\varpi^{\perp}}(z_{1})- \pbf_{\varpi^{\perp}}(z_{2})|^{2} \le (1+\Lambda^{2})|\pbf_{\varpi}(z_1) - \pbf_{\varpi}(z_2)|^{2},
    $$
    so $\pbf_{\varpi}|_{\gr(g)}$ is invertible on its image. We claim this image contains $B_{\tau \delta^{-1} r}(\pbf_{\varpi}(x',f(x')))$. Indeed, let $\pi$ denote the translate of $\varpi$ passing through $(x', f(x'))$. Then, $\Lip(f - A) \le \Lambda$ implies
    $$
        \gr (f) \cap \Bbf_{\delta^{-1} r}((x',f(x'))) \subset \left\{ z \in \Bbf_{\delta^{-1} r}((x', f(x'))) : \dist(z, \pi) \le \Lambda \delta^{-1} r \right\}.
    $$
    Hence, the graphicality of $f$ ensures $\pbf_{\varpi}(\gr(f) \cap \Bbf_{\delta^{-1} r}(x',f(x'))$ contains the $\pbf_{\varpi}$-image of the disc
    $$
        \{ z \in \Bbf_{\delta^{-1} r}((x',f(x'))) : \dist(z, \pi) = \Lambda r \}.
    $$
    By the Pythagorean theorem, this is precisely the disk $B_{\tau \delta^{-1} r}(\pbf_{\varpi}(x',f(x')))$. 

    Meanwhile, (2) follows since $|f(x') - \bar{x}| \le \sigma \delta^{-1} r$ implies
    \begin{align*}
    |\pbf_{\varpi}( x', f(x')) & - \pbf_{\varpi}(x_0)| \le \frac{\Lip(A)}{\sqrt{1+\Lip(A)^{2}}} |(x', f(x')) - x_0| \\
    & = \frac{\Lip(A)}{\sqrt{1+\Lip(A)^{2}}} |f(x') - \bar{x}| \le \frac{\sigma r \Lip(A)}{\sqrt{1+ \Lip(A)^{2}}}.
    \end{align*}
    In particular, if $y \in B_{r}(\pbf_{\varpi}(x_0))$ then 
    $$
        |y - \pbf_{\varpi}( x', f(x'))| \le |y- \pbf_{\varpi}(x_0)| +  \frac{\sigma \delta^{-1} r \Lip(A)}{\sqrt{1+ \Lip(A)^{2}}} \le r + \frac{\sigma \delta^{-1} r\Lip(A)}{\sqrt{1+\Lip(A)^{2}}}
    $$
    so that \eqref{e:closeenough} implies $y \in B_{\tau \delta^{-1} r}(y')$ as desired.

        To prove (3), consider the map $x \mapsto y(x)$ and note it has explicit form
        $$
            y(x) = \pbf_{\varpi}( x,f(x)).
        $$

        Let $\{e_{i}\}_{i=1}^{m+n}$ be an orthonormal basis for $\R^{m + n}$ adjusted to the decomposition $\R^{m} \times \R^{n}$.
        Since $A$ graphs $\varpi$, it follows that for $i=1, \dots, m$
        $$
        \xi_{i} \defeq \frac{e_{i} + A e_{i}}{|e_{i} + A e_{i}|}
        $$
        is an orthonormal basis for the embedding of $\varpi$ in $\R^{m+n}$. Choose $\xi_{m+j}$ for $j=1, \dots, n$ so that $\{\xi_{i}\}_{i=1}^{m+n}$ is an orthonormal basis for $\R^{m+n}$.

        With respect to these bases, it follows that {$(\nabla F)^{i}_{j} = \nabla (F \cdot \xi_{i}) \cdot e_{j}= |(e_{i} + A(e_{i})| \delta^i_j$ for $i,j =1, \dots, m$}, where $\delta^i_{j}$ is the Kroenecker delta. In particular, with respect to these coordinates, $(\nabla F)_{1 \le i,j \le m}$ is a diagonal matrix with entries at least $1$. On the other hand, for $i,j = 1, \dots, m$,
        $$
            |\nabla (y-F)^{i}_{j}| = |\nabla( (y-F) \cdot \xi_{i} ) \cdot e_{j} |= |\xi_{i}^{t}  \left( \nabla \left( f - A \right) \right) e_{j}|  \le [\nabla f]_{C^{\alpha}(B_{\delta^{-1} r}(x'))} (\delta^{-1} r)^{\alpha}.
        $$
        Therefore, for $R_{2}$ small enough depending on $\delta, m,n$, and $[\nabla f]_{C^{\alpha}(B_{\delta^{-1} r}(x'))}$ and hence depending on $\Lambda, m,n$, and $[\nabla f]_{C^{\alpha}(B_{\delta^{-1} r}(x'))}$ all eigenvalues of\footnote{At the risk of abusing notation, we are henceforth identifying the $m\times n$ matrix $\nabla y$ with the square $m\times m$ matrix consisting of its non-zero entries.} $(\nabla y) \equiv (\nabla y)_{1\leq i,j \leq m}$ are at least $\frac{1}{2}$, so we may apply the inverse function theorem. Let $y^{-1}$ denote its inverse, which satisfies
        \begin{equation}\label{e:invft}
            \nabla y^{-1}(z) = (\nabla y( y^{-1}(z)))^{-t} 
        \end{equation}

        Hence, we can bound 
        $[ \nabla y^{-1}]_{C^{\alpha}(B_{\tau \delta^{-1} r}(y'))}$ by $[\nabla y]_{C^{\alpha}(B_{\delta^{-1} r}(x'))}$. Indeed, since $(\nabla y)^{-1}$ is a product of $\det( \nabla y)^{-1}$ and an alternating sum of determinants of the principle minors of $\nabla y$ it follows from the fact that $\det \in \Lip(\R^{m \times m} ; \R)$, $\det( \nabla y) \ge 2^{-m}$, and $t \mapsto t^{-1} \in \Lip( [2^{-m},\infty), \R)$ that $[\nabla y^{-1}]_{C^{\alpha}(B_{\tau \delta^{-1} r}(y'))}$ is bounded by $[\nabla y]_{C^{\alpha}(B_{\delta^{-1} r}(x'))}$ with only dimensional dependencies. We may analogously bound $\|\nabla y^{-1}\|_{C^0(B_{\tau \delta^{-1}r}(y'))}$ by $\|\nabla y\|_{C^0(B_{\delta^{-1}r}(x'))}$.

        Observe that for $\Phi(x)\coloneqq \pbf_{\varpi}^\perp(x,f(x))$, we have $g(z) = \Phi\circ y^{-1}(z)$. The chain rule, together with the regularity of $f$ implies that $g\in C^{1,\alpha}(B_{\delta^{-1} \tau r}(y');\varpi^\perp)$. It remains to check the H\"{o}lder estimate. For this, we again use the chain rule to compute 
        \begin{align*}
            [\nabla g]_{C^{\alpha}} &= [\nabla(\Phi\circ y^{-1})]_{C^{\alpha}} \\
            &= [(\nabla y^{-1})^t (\nabla \Phi\circ y^{-1})]_{C^{\alpha}} \\
            &\leq \|\nabla \Phi\circ y^{-1}\|_{C^0} [\nabla y^{-1}]_{C^{\alpha}} + [\nabla \Phi\circ y^{-1}]_{C^{\alpha}} \| \nabla y^{-1}\|_{C^0},
        \end{align*}
        where the domain for all the norms and seminorms is $B_{\tau \delta^{-1} r}(y')$. Combining this with the above bounds on $[ \nabla y^{-1}]_{C^{\alpha}(B_{\tau \delta^{-1} r}(y'))}$ and $\|\nabla y^{-1}\|_{C^0(B_{\tau \delta^{-1}r}(y'))}$, the conclusion follows.
        
        Finally, we prove (4). Letting $y_i(x) \coloneqq (x,f_i(x))$ for $i=1,2$, and let $\Phi_i$ be the respective functions such that $g_i=\Phi_i\circ y_i^{-1}$,  and suppose $z \in B_{r}(\pbf_{\varpi}(x_0))$. Without loss of generality, suppose that $[\nabla f_1]_{C^{\alpha}(B_{\delta^{-1} r}(x'))}\leq [\nabla f_2]_{C^{\alpha}(B_{\delta^{-1} r}(x'))}$. We have
        
        \begin{align*}
            |\nabla g_1(z) - \nabla g_2(z)| &= |\nabla \Phi_1(y_1^{-1}(z)) - \nabla \Phi_2(y_2^{-1}(z))| \\
            &\le|\nabla \Phi_1(y_1^{-1}(z)) - \nabla \Phi_1(y_2^{-1}(z))| + |\nabla \Phi_1(y_2^{-1}(z)) - \nabla \Phi_2(y_2^{-1}(z))| \\
            & \le [\nabla \Phi_{1}]_{C^{\alpha}} |y_{1}^{-1}(z) - y_{2}^{-1}(z)|^\alpha + \|\nabla \Phi_1 - \nabla \Phi_2\|_{C^0(B_{\delta^{-1} r}(x'))} \\
            &\le \frac{\Lip(A)^\alpha}{(1 + \Lip(A)^{2})^{\frac{\alpha}{2}}} [\nabla \Phi_{1}]_{C^{\alpha}(B_{\delta^{-1} r}(x'))} |f_1(y_1^{-1}(z)) - f_2(y_2^{-1}(z))|^\alpha \\
            &\qquad+ \|\nabla \Phi_1 - \nabla \Phi_2\|_{C^0(B_{\delta^{-1} r}(x'))} \\
            &\le \frac{\Lip(A)^\alpha}{(1 + \Lip(A)^{2})^{\frac{\alpha}{2}}}  [\nabla f_1]_{C^{\alpha}(B_{\delta^{-1} r}(x'))} |f_1(y_1^{-1}(z)) - f_2(y_2^{-1}(z))|^\alpha \\
            &\qquad+ \|\nabla f_1 - \nabla f_2\|_{C^0(B_{\delta^{-1} r}(x'))}.
        \end{align*}
         
\end{proof}

\bibliographystyle{alpha}
\bibdata{references}
\bibliography{references}

\end{document}